\documentclass[a4paper, 11pt, twoside, notitlepage]{amsart}

\usepackage{amsmath, amssymb, amsthm, amscd}
\usepackage{mathtools}
\usepackage{mathrsfs}

\usepackage{graphicx, xcolor}
\usepackage{tikz}

\usepackage[ocgcolorlinks, linkcolor=blue]{hyperref}
\usepackage{url}
\usepackage[shortlabels]{enumitem}
\usepackage{comment}
\usepackage{verbatim}

\usepackage{bm}
\usepackage{bbm}
\usepackage{dsfont}
\usepackage{relsize}
\usepackage{esint}
\usepackage[normalem]{ulem}

\usepackage[utf8]{inputenc}

\newcommand\redsout{\bgroup\markoverwith{\textcolor{red}{\rule[0.5ex]{2pt}{0.4pt}}}\ULon}

\usepackage{tikz}
\usepackage{dsfont}
\usepackage{relsize}
\usepackage{url}
\usepackage{xcolor}
\usepackage{graphicx}
\usepackage{mathrsfs}
\usepackage[shortlabels]{enumitem}
\usepackage{lineno}
\usepackage{amsmath}
\usepackage{enumitem}
\usepackage{amsthm} 
\usepackage{verbatim}
\usepackage{dsfont}
\numberwithin{equation}{section}

\DeclareMathOperator{\tr}{tr}

\allowdisplaybreaks

\mathtoolsset{showonlyrefs}

\graphicspath{{images/}}

\newtheorem{theorem}{Theorem}[section]
\newtheorem{lemma}[theorem]{Lemma}
\newtheorem{remark}[theorem]{Remark}

\newtheorem{proposition}[theorem]{Proposition}
\newtheorem*{lemma*}{Lemma}
\theoremstyle{definition}
\newtheorem{definition}[theorem]{Definition}
\newtheorem{corollary}[theorem]{Corollary}

\newtheorem{assumption}[theorem]{Assumption}
\newtheorem{example}[theorem]{Example}

\title[Fully nonlinear elliptic equations]{An inverse source problem for a fully nonlinear elliptic equation}

\author[C.-L. Lin]{Ching-Lung Lin}
\address{Department of Mathematics, National Cheng Kung University, Tainan 701, Taiwan.}
\email{cllin2@mail.ncku.edu.tw}

\author[Y.-H. Lin]{Yi-Hsuan Lin}
\address{Department of Applied Mathematics, National Yang Ming Chiao Tung University, Hsinchu, Taiwan \& Fakult\"at f\"ur Mathematik, University of Duisburg-Essen, Essen, Germany}
\curraddr{}
\email{yihsuanlin3@gmail.com}

\author[J.-N. Wang]{Jenn-Nan Wang}
\address{Institute of Applied Mathematical Sciences, National Taiwan University, Taipei 106, Taiwan}
\curraddr{}
\email{jnwang@math.ntu.edu.tw}

\keywords{Inverse problems, fully nonlinear, concave, anisotropic Calder\'on problem.}
\subjclass[2020]{35R30, 35J25, 35J60, 35J96}

\newcommand{\R}{{\mathbb R}}

\newcommand{\cS}{{S}}

\newcommand{\eps}{\epsilon}

\newcommand {\p} {\partial}

\begin{document}
	
	\begin{abstract}
		We study an inverse source problem for fully nonlinear elliptic equations of the form
		\[
		F(D^2u)=f \quad \text{in } \Omega.
		\]
		The question is whether the source term can be recovered from the Dirichlet-to-Neumann map. In two dimensions, the first linearization does not immediately give uniqueness: it leaves a natural conformal ambiguity in the linearized coefficients. For homogeneous nonlinearities $F$ with injective differential $DF$, we show that this ambiguity has a precise meaning at the level of the equation itself, namely that the source is determined up to an explicit scalar factor.
		
		The main point of the paper is to show how this remaining factor can be removed. We use the second linearization to extract information which is invisible at first order, and combine it with an algebraic nondegeneracy condition on the nonlinearity. Under this condition, the residual ambiguity is forced to be trivial, and the Dirichlet-to-Neumann map uniquely determines the source. The result applies, in particular, to homogeneous admissible Hessian equations of Monge--Amp\`ere type and related examples.
	\end{abstract}
	
	\maketitle

	\tableofcontents
	
	\section{Introduction}\label{sec: introduction}
	
	In this work, we study the inverse source problem for a class of fully nonlinear elliptic equations. From a physical point of view, the source term $f$ models internal forcing mechanisms of the medium, such as distributed loads, heat generation, or external fields acting inside the domain, while the state variable $u$ describes the resulting equilibrium configuration or potential. By prescribing boundary values of $u$ and measuring the induced boundary flux $\partial_\nu u$, one performs noninvasive boundary experiments on the system. The inverse problem is then to determine whether these boundary measurements uniquely recover the internal source $f$.
	
	The mathematical model is given by the boundary value problem
	\begin{equation}\label{eq: main}
		\begin{cases}
			F(D^2u)=f & \text{in } \Omega, \\
			u=\varphi & \text{on } \partial\Omega,
		\end{cases}
	\end{equation}
	where $\Omega\subset \R^2$ is a bounded domain with $C^\infty$ boundary $\partial\Omega$. In this paper, we work in the admissible setting for Hessian equations as in \cite{CNS_nonlinear_Hessian}. More precisely, let $\Gamma\subset \R^2$ be an open convex cone with vertex at the origin, symmetric with respect to permutations of the coordinates, and containing the positive cone
	\begin{equation}\label{eq: Gamma plus}
		\Gamma_+:=\left\{(\lambda_1,\lambda_2)\in \R^2:\,  \lambda_1>0,\ \lambda_2>0\right\}.
	\end{equation}
	We assume that $F(M)=\widetilde f(\lambda(M))$, where $\lambda(M)=(\lambda_1(M),\lambda_2(M))$ denotes the eigenvalues of $M$, and $\widetilde f\in C^\infty(\Gamma)\cap C^0(\overline{\Gamma})$ is symmetric. Let $\cS$ denote the vector space of all $2\times 2$ real symmetric matrices. We write
	\[
	{\mathcal A}_\Gamma:=\{M\in \cS:\, \lambda(M)\in\Gamma\}
	\]
	for the admissible set determined by $\Gamma$.
	
	Following \cite{CNS_nonlinear_Hessian}, we impose the following admissibility assumptions on $\widetilde f$.
	
	\begin{assumption}\label{assump1}~
		\begin{enumerate}[(i)]
			\item\label{item 1 - elliptic} $\widetilde f$ is elliptic in $\Gamma$, namely
			\[
			\frac{\partial \widetilde f}{\partial \lambda_i}>0
			\quad \text{in }\, \Gamma,\quad i=1,2;
			\]
			
			\item\label{item 2 -concavity} $\widetilde f$ is concave in $\Gamma$;
			
			\item\label{item 3 - boundary} the boundary-side condition
			\begin{equation}\label{mini}
				\limsup_{\lambda\to\lambda_0}\widetilde f(\lambda)\le \overline f_0
				\quad \text{for all } \lambda_0\in \partial\Gamma
			\end{equation}
			holds for the prescribed source $f>0$, where
			\[
			f_0:=\min_{\overline{\Omega}}f
			\quad \text{and} \quad
			0<\overline f_0<f_0;
			\]
			
			\item\label{item 4 - cone 1} for every compact set $K\subset \Gamma$ and every $C>0$, there exists $R=R(K,C)>0$ such that
			\[
			\widetilde f(\lambda_1,\lambda_2+R)\ge C
			\quad \text{for all } (\lambda_1,\lambda_2)\in K;
			\]
			
			\item\label{item 5 - cone 2} for every compact set $K\subset \Gamma$ and every $C>0$, there exists $R=R(K,C)>0$ such that
			\[
			\widetilde f(R\lambda)\ge C
			\quad \text{for all } \lambda\in K.
			\]
		\end{enumerate}
	\end{assumption}
	
	We also assume an admissibility condition on the boundary of the domain: there exists $T>0$ such that, at every point of $\partial\Omega$, if $\kappa_1$ denotes the principal curvature of $\partial\Omega$ with respect to the interior normal, then
	\begin{equation}\label{eq: CNS domain intro}
		(\kappa_1,T)\in \Gamma.
	\end{equation}
	
	\begin{definition}[Admissible solutions]
		A solution $u\in C^2(\overline{\Omega})$ of \eqref{eq: main} with $u=\varphi$ on $\partial\Omega$ is called \emph{admissible} if $\lambda(D^2u(x))\in \Gamma$ for all $x\in \overline{\Omega}$.
	\end{definition}
	
	For an admissible matrix $M$, the ellipticity condition $\partial_{\lambda_i}\widetilde f>0$ implies that the matrix $\big(F_{ab}(M)\big)$ is positive-definite, where
	\begin{equation}\label{eq: F_first derivative}
		F_{ab}(M):=\frac{\partial F}{\partial M_{ab}}(M), \quad a,b=1,2.
	\end{equation}
	In particular, for every admissible solution $u$, the first linearized operator $F_{ab}(D^2u)\partial_{ab}$ is elliptic.
	
	Under the above assumptions, the direct problem \eqref{eq: main} is well posed in the admissible class by \cite[Theorem~2]{CNS_nonlinear_Hessian}. We shall recall in Section~\ref{sec: prel} the precise well-posedness statement needed in this paper. Thanks to the well-posedness of \eqref{eq: main}, one can define the associated Dirichlet-to-Neumann (DN) map
	\begin{equation}\label{eq: DN map}
		\Lambda_f:C^\infty(\partial\Omega)\to C^\infty(\partial\Omega),
		\quad
		\varphi\mapsto \partial_\nu u_\varphi\big|_{\partial\Omega},
	\end{equation}
	where $u_\varphi\in C^\infty(\overline{\Omega})$ is the unique admissible solution to \eqref{eq: main}. The inverse problem considered in this paper is the following:
	\begin{enumerate}[\textbf{(IP)}]
		\item \label{Q:IP} Can one uniquely determine the source function $f$ in \eqref{eq: main} from the knowledge of the DN map?
	\end{enumerate}
	
	Equations of the form \eqref{eq: main} arise, for instance, in nonlinear elasticity, where the unknown $u$ represents the displacement of the material and the operator $F(D^2u)$ describes a nonlinear constitutive law relating strain or curvature to internal stress. In this setting, the source term $f$ corresponds to body forces acting inside the material, and the inverse source problem amounts to recovering hidden internal forces from boundary displacement and traction measurements.
	
	Inverse source problems have been extensively studied for linear elliptic equations, but remain far less understood in the fully nonlinear setting. As a notable example, when $F(M)=\det M$, corresponding to the Monge-Amp\`ere equation, an analogue of the inverse problem \ref{Q:IP} was recently resolved in~\cite{LL2025IP_Monge_Ampere}. The present work is motivated by the question of whether similar uniqueness results continue to hold for a wider class of admissible fully nonlinear elliptic equations. We first obtain a determination result up to an explicit scalar factor, and then impose an additional non-proportionality condition to remove this remaining freedom and obtain full uniqueness.
	
	To study the inverse problem, we impose the following additional structural assumptions on $F$. For an admissible matrix $M=(M_{ab})_{1\le a,b\le 2}$, we write
	\begin{equation}\label{eq: F_second derivative}
		F_{ab,k\ell}(M):=\frac{\partial^2 F}{\partial M_{ab}\partial M_{k\ell}}(M), \quad a,b,k,\ell=1,2.
	\end{equation}
	
	\begin{assumption}\label{ass: homogeneous nondegeneracy}
		In addition to Assumption~\ref{assump1}, we further assume that:
		\begin{enumerate}[{\rm(i)}]
			\item\label{item ass_positive_homo} $F$ is positively homogeneous of degree $m$ with $0<m\neq 1$, that is,
			\begin{equation}
				F(tM)=t^mF(M)
				\quad \text{for all admissible } M \text{ and all } t>0;
			\end{equation}
			
			\item\label{item ass_DF_inj} $DF$ is injective on the admissible class, namely
			\begin{equation}
				DF(M_1)=DF(M_2)
				\implies
				M_1=M_2
			\end{equation}
			for all admissible matrices $M_1,M_2$;
			
			\item\label{item ass_non-prop} for every admissible matrix $M$, the second derivative tensor $D^2F(M)$ is not proportional to the rank-one tensor $DF(M)\otimes DF(M)$, namely there does not exist any scalar $\gamma\in \mathbb R$ such that
			\begin{equation}
				F_{ab,k\ell}(M)=\gamma\,F_{ab}(M)F_{k\ell}(M)
				\quad \text{for all } 1\le a,b,k,\ell\le 2.
			\end{equation}
		\end{enumerate}
	\end{assumption}
	
	We are now ready to state the main result of this paper.
	
	\begin{theorem}\label{thm: main}
		Let $\Omega\subset \R^2$ be a bounded simply connected domain with $C^\infty$ boundary $\partial\Omega$. Let $F(M)=\widetilde f(\lambda(M))$, where $\widetilde f$ is a smooth symmetric function defined on an open convex cone $\Gamma\subset \R^2$. Assume that \eqref{eq: CNS domain intro} holds, and that $\widetilde f$ satisfies Assumption~\ref{assump1} with respect to each source $f_j$, $j=1,2$. Assume furthermore that $F$ satisfies {\rm Assumption~\ref{ass: homogeneous nondegeneracy}}.
		
		Let $\Lambda_{f_j}$ be the DN map associated with the admissible solution of
		\begin{equation}\label{eq: main j=1,2}
			\begin{cases}
				F(D^2 u^{(j)})=f_j &\text{ in }\Omega, \\
				u^{(j)}=\varphi &\text{ on }\partial\Omega,
			\end{cases}
		\end{equation}
		for $j=1,2$, where $0<f_j\in C^\infty(\overline{\Omega})$ satisfy \eqref{mini}. Suppose that
		\begin{equation}\label{eq: BC for the source}
			f_1 = f_2 \text{ on }\partial\Omega.
		\end{equation}
		Then
		\begin{equation}\label{DN map same}
			\Lambda_{f_1}(\varphi)=\Lambda_{f_2}(\varphi), \quad \text{for all }\varphi \in C^\infty(\partial\Omega),
		\end{equation}
		implies $f_1=f_2$ in $\overline{\Omega}$.
	\end{theorem}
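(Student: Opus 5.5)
Fix a boundary datum $\varphi$ and write $u^{(j)}=u^{(j)}_\varphi$ for the admissible solutions of \eqref{eq: main j=1,2}. We first linearize. For a perturbation $\varphi+\epsilon g$, differentiating at $\epsilon=0$ gives $v^{(j)}$ solving $A^{(j)}_{ab}\partial_{ab}v^{(j)}=0$ with $v^{(j)}=g$ on $\partial\Omega$, where $A^{(j)}_{ab}:=F_{ab}(D^2u^{(j)})$. From \eqref{DN map same}, the DN maps of these two linear nondivergence elliptic operators coincide for all $g$.

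Write $A\partial_{ab}v=\partial_a(A_{ab}\partial_b v)-(\partial_aA_{ab})\partial_bv$. In two dimensions the anisotropic Calder\'on theory (isothermal coordinates, with $\Omega$ simply connected) together with boundary determination then gives $A^{(1)}=\Phi_*A^{(2)}$ up to a conformal factor. After also matching the first order terms, I expect the conclusion
\[
A^{(1)}=\rho A^{(2)}
\]
for a positive scalar function $\rho$ with $\rho=1$ on $\partial\Omega$. The diffeomorphism should be trivial: the first-order terms and the equality of the Cauchy data $u^{(1)}=u^{(2)}$, $\partial_\nu u^{(1)}=\partial_\nu u^{(2)}$ on $\partial\Omega$ force $\Phi=\mathrm{id}$. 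This is the ``conformal ambiguity'' mentioned in the abstract.

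Next we convert the conformal ambiguity into a scalar-factor relation. By homogeneity of degree $m$, $DF$ is homogeneous of degree $m-1\neq 0$. Hence $\rho\,DF(M_2)=DF(\rho^{1/(m-1)}M_2)$, and injectivity (Assumption~\ref{ass: homogeneous nondegeneracy}\eqref{item ass_DF_inj}) yields
\[
D^2u^{(1)}=\rho^{1/(m-1)}D^2u^{(2)}.
\]
Consequently $f_1=\rho^{m/(m-1)}f_2$, so the source is determined up to the factor $\sigma:=\rho^{1/(m-1)}$, with $\sigma=1$ on $\partial\Omega$ (consistent with \eqref{eq: BC for the source}). Since this holds for every $\varphi$ and $f_1,f_2$ do not depend on $\varphi$, the factor $\sigma$ is in fact independent of $\varphi$.

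The remaining step is to show $\sigma\equiv1$ using the second linearization. Differentiate twice in two directions $g_1,g_2$. The second-order term $w^{(j)}$ satisfies
\[
A^{(j)}_{ab}\partial_{ab}w^{(j)}+F_{ab,k\ell}(D^2u^{(j)})\,\partial_{ab}v^{(j)}_1\,\partial_{k\ell}v^{(j)}_2=0,
\]
and its DN data agree. Since $F_{ab,k\ell}$ is homogeneous of degree $m-2$, we have $D^2F(M_1)=\sigma^{m-2}D^2F(M_2)$, while $A^{(1)}=\sigma^{m-1}A^{(2)}$. The first-order solutions then coincide, $v^{(1)}_i=v^{(2)}_i=:v_i$, because the operators differ only by the scalar $\sigma^{m-1}$. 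Dividing the equation for $j=1$ by $\sigma^{m-1}$ and subtracting gives
\[
A^{(2)}_{ab}\partial_{ab}(w^{(1)}-w^{(2)})+(\sigma^{-1}-1)F_{ab,k\ell}(M_2)\,\partial_{ab}v_1\,\partial_{k\ell}v_2=0,
\]
with $w^{(1)}-w^{(2)}$ having zero Cauchy data. Integrating against a third solution $v_3$ of the adjoint equation yields
\[
\int_\Omega(\sigma^{-1}-1)\,F_{ab,k\ell}(M_2)\,\partial_{ab}v_1\,\partial_{k\ell}v_2\,v_3\,dx=0
\]
for all such $v_1,v_2,v_3$.

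The main obstacle is extracting pointwise information from this integral identity. We plan to use complex-geometric-optics or Runge-approximation solutions that localize near a point $x_0$ with prescribed Hessians. Because $v_i$ solve $A\partial^2 v=0$, their Hessians at $x_0$ range over the two-dimensional subspace $\{H:\tr(A H)=0\}$. This would give
\[
(\sigma^{-1}-1)(x_0)\,F_{ab,k\ell}H_{ab}K_{k\ell}=0
\]
for all $H,K$ with $F_{ab}H_{ab}=F_{ab}K_{ab}=0$. If $\sigma(x_0)\ne1$, then $D^2F(M)$ vanishes on $\ker DF(M)\times\ker DF(M)$. By symmetry this forces $D^2F=DF\otimes\ell+\ell\otimes DF$ for some $\ell$. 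We must then reduce to $\ell\propto DF$ using extra information, contradicting Assumption~\ref{ass: homogeneous nondegeneracy}\eqref{item ass_non-prop}. For this reduction I would use Euler's relations $D^2F(M)M=(m-1)DF(M)$ and $DF(M)\cdot M=mF$, possibly together with varying $\varphi$ so that $M$ sweeps an open set, and the fact that $\sigma$ is independent of $\varphi$. Once $\sigma(x_0)\ne1$ is ruled out at every point, we get $\sigma\equiv1$ on the open set where it would be nontrivial, hence everywhere, and therefore $f_1=f_2$.
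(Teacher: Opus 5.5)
Up to the relation $D^2u^{(1)}_\varphi=\sigma D^2u^{(2)}_\varphi$, $f_1=\sigma^m f_2$ with $\sigma$ independent of $\varphi$, you follow the paper. For $A^{(1)}=\rho A^{(2)}$ the paper simply invokes Proposition~\ref{prop: J=Id}; your ``Cauchy data force $\Phi=\mathrm{id}$'' is only a heuristic, so you should cite that result too. Your algebraic endgame also works. If $D^2F(M)=\alpha\otimes\ell+\ell\otimes\alpha$ with $\alpha=DF(M)$, then putting $M$ into one slot and using Euler's relations gives $mF(M)\,\ell=(m-1-\ell(M))\,\alpha$. Since $F(M)=f_2(x_0)>0$, this forces $\ell\propto\alpha$, which contradicts non-proportionality; no variation of $\varphi$ is needed.

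The genuine gap is the localization step. What you actually have is the integral identity $\int_\Omega(\sigma^{-1}-1)\,D^2F(M_2)[D^2v_1,D^2v_2]\,v_3\,dx=0$. To deduce $(\sigma^{-1}-1)(x_0)\,D^2F(M_2(x_0))[H,K]=0$ for all $H,K\in K_{A(x_0)}$, you would need a density or concentration theorem for products of two Hessians of solutions and one adjoint solution of a variable-coefficient anisotropic non-divergence operator. These products are moreover weighted by the $x$-dependent tensor $D^2F(M_2(x))$. Runge approximation does not give this: it only makes global solutions close to local ones on a compact set, and says nothing about the integrand on the rest of $\Omega$. A CGO construction of this kind would be a substantial result, and you have not supplied it.

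The missing idea is that, because $\sigma$ (equivalently $\rho$) does not depend on the boundary datum, you can differentiate the \emph{pointwise} relations in the datum, so no integral identity is needed. The paper differentiates $DF(D^2u^{(1)}_\delta)=\rho\,DF(D^2u^{(2)}_\delta)$ along $\phi=\delta\zeta$. This gives $T^{ab}_{k\ell}\partial_{k\ell}v=0$ pointwise with free indices $a,b$, where $T=D^2F(D^2u^{(1)}_0)-\rho\,D^2F(D^2u^{(2)}_0)$. From there it only needs Hessians of global solutions at a single point to span $K_{A(x_0)}$, which follows from local solutions, Runge approximation and interior estimates. Symmetry of $T$ then gives $T=\mu\,A\otimes A$. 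Homogeneity turns this into $\sigma^{m-2}(1-\sigma)D^2F(M_2)=\mu\,DF(M_2)\otimes DF(M_2)$, and non-proportionality forces $\sigma=1$.

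With what you already have, there is an even shorter route. Differentiate $D^2u^{(1)}_{\varphi+\epsilon g}=\sigma D^2u^{(2)}_{\varphi+\epsilon g}$ at $\epsilon=0$ and use $v^{(1)}=v^{(2)}$. This gives $(1-\sigma)D^2v=0$ for every solution $v$ of the first linearized equation. Now take a solution with $D^2v(x_0)\neq0$, from Corollary~\ref{cor: global spanning Hessians} or by unique continuation (otherwise every solution would be affine near $x_0$, hence affine globally). This yields $\sigma(x_0)=1$, without even using non-proportionality.
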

	
	\begin{example}\label{rem: examples homogeneous}
		Let $\Gamma=\Gamma_+$ be the first quadrant of $\R^2$, that is,
		\begin{equation}
			\Gamma=\{(\lambda_1,\lambda_2): \, \lambda_1>0,\ \lambda_2>0\}.
		\end{equation}
		Such a set $\Gamma$ is referred to as \emph{type\,1} in \cite{CNS_nonlinear_Hessian}. Equivalently, the corresponding admissible set is the positive definite cone in the space of symmetric matrices. A basic example is given by
		\begin{equation}
			F(M)=(\det M)^\theta, \quad \theta\in (0,1/2).
		\end{equation}
		Equivalently, $\widetilde f(\lambda_1,\lambda_2)=\lambda_1^\theta\lambda_2^\theta$. It is straightforward to check that $\widetilde f$ satisfies Assumption~\ref{assump1}. Moreover, $F$ is positively homogeneous of degree $2\theta\neq 1$, and
		\begin{equation}
			DF(M)=\theta (\det M)^\theta M^{-1},
		\end{equation}
		which is positive-definite for every $M>0$. Furthermore, when $\theta\in (0,1/2)$, the functional $F(M)=(\det M)^\theta$ is strictly concave on $\Gamma$. In particular, the gradient map $DF$ is injective on $\Gamma$.
		
		To verify the non-proportionality condition, note that
		\begin{equation}
			D^2F(M)[H,K]=\theta (\det M)^\theta\big(\theta \tr(M^{-1}H)\tr(M^{-1}K)-\tr(M^{-1}HM^{-1}K)\big).
		\end{equation}
		Working in a basis in which
		\begin{equation}
			M=\begin{pmatrix}
				\lambda_1 & 0\\
				0 & \lambda_2
			\end{pmatrix},
			\quad \lambda_1,\lambda_2>0,
		\end{equation}
		choose
		\begin{equation}
			H=	\begin{pmatrix}
				\lambda_1 & 0\\
				0 & -\lambda_2
			\end{pmatrix}.
		\end{equation}
		Then $\tr(M^{-1}H)=1-1=0$, so $DF(M):H=0$. On the other hand, $\tr(M^{-1}HM^{-1}H)=2$, which yields
		\begin{equation}
			D^2F(M)[H,H]=-2\theta (\det M)^\theta \neq 0.
		\end{equation}
		If $D^2F(M)$ were proportional to $DF(M)\otimes DF(M)$, then there would exist a scalar $\gamma\in \mathbb R$ such that
		\begin{equation}
			D^2F(M)[H,H]=\gamma (DF(M):H)^2
		\end{equation}
		for every symmetric matrix $H$. Since $DF(M):H=0$ for the above choice of $H$, this would imply $D^2F(M)[H,H]=0$, which is a contradiction. Therefore, $D^2F(M)$ is not proportional to $DF(M)\otimes DF(M)$.
	\end{example}	
	
	\begin{example}\label{CNB-typeII}
		We consider the \emph{type\,2} example given in \cite[p.~264]{CNS_nonlinear_Hessian}. Let
		\begin{equation}
			\widetilde f(\lambda_1,\lambda_2)=\big((\lambda_1+\lambda_2)^2-\tau(\lambda_1-\lambda_2)^2\big)^\theta,
			\quad	0<\tau<1,\quad 0<\theta<1/2.
		\end{equation}
		The corresponding cone is
		\begin{equation}
			\Gamma:=\left\{(\lambda_1,\lambda_2)\in\R^2:\lambda_1+\lambda_2>\sqrt{\tau}\,|\lambda_1-\lambda_2|\right\}.
		\end{equation}
		Then $\Gamma$ is an open convex cone with vertex at the origin, symmetric with respect to permutations of the coordinates, and it contains $\Gamma_+$ as a proper subset.
		
		For $M\in \cS$, set $A(M):=(1-\tau)(\tr M)^2+4\tau\det M$. Then $F(M):=A(M)^\theta$ corresponds to $\widetilde f(\lambda_1,\lambda_2)$, since $A(M)=(\lambda_1+\lambda_2)^2-\tau(\lambda_1-\lambda_2)^2$. Equivalently, if $k=4\tau/(1-\tau)$, then, after factoring out the positive constant $(1-\tau)^\theta$, this corresponds to
		\begin{equation}
			F(D^2u)=\big((\Delta u)^2+k(u_{xx}u_{yy}-u_{xy}^2)\big)^\theta.
		\end{equation}
		
		Let $\mu_1:=\lambda_1+\lambda_2+\sqrt{\tau}(\lambda_1-\lambda_2)$ and $\mu_2:=\lambda_1+\lambda_2-\sqrt{\tau}(\lambda_1-\lambda_2)$. Then $\Gamma$ is transformed into the positive quadrant $\{\mu_1>0,\ \mu_2>0\}$, and $\widetilde f(\lambda_1,\lambda_2)=(\mu_1\mu_2)^\theta$. Hence $\widetilde f$ is elliptic and concave in $\Gamma$, since $0<\theta<1/2$. Moreover, $\widetilde f\to0$ as $\lambda$ approaches $\partial\Gamma$, and the growth conditions in Assumption~\ref{assump1} follow from the positive homogeneity. Thus, $\widetilde f$ satisfies Assumption~\ref{assump1}.
		
		We now verify Assumption~\ref{ass: homogeneous nondegeneracy}. First, $F$ is positively homogeneous of degree $2\theta$, and $0<2\theta<1$. Thus, Assumption~\ref{ass: homogeneous nondegeneracy}~\ref{item ass_positive_homo} holds. Next, we prove that $DF$ is injective on the admissible class. Since $A(M)=(1-\tau)(\tr M)^2+4\tau\det M$ and, in dimension two, $\operatorname{cof}M=(\tr M)I-M$ for symmetric matrices, we have
		\begin{equation}
			DA(M)=2(1-\tau)(\tr M)I+4\tau\operatorname{cof}M=2(1+\tau)(\tr M)I-4\tau M.
		\end{equation}
		The map $M\mapsto DA(M)$ is a linear isomorphism on $\cS$. Indeed, writing $M=aI+M_0$ with $\tr M_0=0$, one obtains
		\[
		DA(M)=4aI-4\tau M_0,
		\]
		so the scalar and trace free parts are both recovered uniquely from $DA(M)$.
		
		Suppose that $M_1,M_2\in\mathcal A_\Gamma$ and $DF(M_1)=DF(M_2)$. Since $DF(M)=\theta A(M)^{\theta-1}DA(M)$ and $A(M)>0$ on $\mathcal A_\Gamma$, we obtain $DA(M_1)=\rho\,DA(M_2)$ for some $\rho>0$. Since $M\mapsto DA(M)$ is a linear isomorphism, this implies $M_1=\rho M_2$. Using the homogeneity of $DF$, we get $DF(M_1)=DF(\rho M_2)=\rho^{2\theta-1}DF(M_2)$. Since $DF(M_1)=DF(M_2)$ and $2\theta-1\neq0$, it follows that $\rho=1$. Hence, $M_1=M_2$, and Assumption~\ref{ass: homogeneous nondegeneracy}~\ref{item ass_DF_inj} holds.
		
		It remains to verify the non-proportionality condition. Fix $M\in\mathcal A_\Gamma$. Since $M$ is symmetric, we may work in an orthonormal basis in which $M=\operatorname{diag}(\lambda_1,\lambda_2)$. Choose
		\begin{equation}
			H=\begin{pmatrix}
				0&1\\
				1&0
			\end{pmatrix}.
		\end{equation}
		Then $\tr H=0$, and since $\operatorname{cof}M$ is diagonal, we have $DA(M)[H]=0$. Consequently, $DF(M)[H]=\theta A(M)^{\theta-1}DA(M)[H]=0$. On the other hand, $A(M+sH)=A(M)-4\tau s^2$, so $D^2A(M)[H,H]=-8\tau$. Therefore,
		\begin{equation}
			D^2F(M)[H,H]=\theta A(M)^{\theta-1}D^2A(M)[H,H]=-8\theta\tau A(M)^{\theta-1}\neq 0.
		\end{equation}
		If $D^2F(M)$ were proportional to $DF(M)\otimes DF(M)$, then there would exist a scalar $\gamma\in\mathbb R$ such that $D^2F(M)[H,H]=\gamma(DF(M)[H])^2$. For the above choice of $H$, the right hand side is zero, while the left hand side is nonzero. This is a contradiction. Hence $D^2F(M)$ is not proportional to $DF(M)\otimes DF(M)$, and Assumption~\ref{ass: homogeneous nondegeneracy}~\ref{item ass_non-prop} holds.
	\end{example}

	Before proving the uniqueness result, we record the part of the argument that does not use the non-proportionality condition. It shows that the equality of the DN maps already determines the two possible sources up to an explicit scalar factor.
	
	\begin{proposition}[Determination up to a gauge]\label{prop: determination up to gauge}
		Let the assumptions of {\rm Theorem~\ref{thm: main}} hold, except possibly the non-proportionality condition in {\rm Assumption~\ref{ass: homogeneous nondegeneracy}~\ref{item ass_non-prop}}. Let $u_0^{(j)}\in C^\infty(\overline{\Omega})$, $j=1,2$, be the admissible solutions to
		\begin{equation}\label{eq: main background zero gauge}
			\begin{cases}
				F(D^2u_0^{(j)})=f_j &\text{ in }\,\Omega, \\
				u_0^{(j)}=0 &\text{ on }\,\partial\Omega.
			\end{cases}
		\end{equation}
		If \eqref{eq: BC for the source} and \eqref{DN map same} hold, then there exists a positive function $c_0=c_0(x)$ in $\Omega$ such that
		\begin{equation}\label{eq: source gauge relation}
			f_1=c_0^{\frac{m}{m-1}}f_2
			\quad \text{in }\,\Omega.
		\end{equation}
	\end{proposition}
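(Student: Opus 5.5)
We use the first linearization of the DN map around the zero boundary datum, together with the two-dimensional anisotropic Calder\'on result, and then read off the scalar factor from homogeneity.

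\emph{Step 1: first linearization.} Take $\varphi=\epsilon g$ with $g\in C^\infty(\partial\Omega)$ and write $u^{(j)}_\epsilon$ for the admissible solution of \eqref{eq: main j=1,2}. By the well-posedness of the direct problem in the admissible class, together with the implicit function theorem (the linearized operator is elliptic with trivial Dirichlet kernel), $\epsilon\mapsto u^{(j)}_\epsilon$ is smooth near $\epsilon=0$. Differentiating in $\epsilon$ at $\epsilon=0$ shows that $v^{(j)}:=\partial_\epsilon u_\epsilon^{(j)}|_{\epsilon=0}$ solves
\[
A^{(j)}_{ab}\,\partial_{ab}v^{(j)}=0 \ \text{ in }\Omega,\qquad v^{(j)}=g \ \text{ on }\partial\Omega,
\qquad A^{(j)}:=DF(D^2u_0^{(j)}).
\]
Applying \eqref{DN map same} at $\epsilon=0$ gives $\partial_\nu u_0^{(1)}=\partial_\nu u_0^{(2)}$ on $\partial\Omega$. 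Differentiating \eqref{DN map same} once in $\epsilon$ gives equality of the DN maps of the two linear operators $A^{(j)}_{ab}\partial_{ab}$. These operators are in nondivergence form with smooth positive-definite coefficients.

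\emph{Step 2: reduction to the conformal gauge.} In two dimensions, Isothermal coordinates reduce each nondivergence operator to a conformal multiple of the Laplacian in new coordinates. Hence the DN map of $A_{ab}\partial_{ab}$ sees only the conformal class of $A$ together with a boundary-fixing diffeomorphism. We then invoke the known uniqueness for the anisotropic Calder\'on problem in the plane (Sylvester; Astala--Lassas--P\"aiv\"arinta) for simply connected $\Omega$, adapted to the nondivergence setting. Since both operators have the same principal structure, we expect to obtain $A^{(1)}=c_0\,A^{(2)}$ with $c_0>0$ and no diffeomorphism left over. The diffeomorphism should be ruled out because the operators are written in the same fixed coordinates and act on the same solution spaces. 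We anticipate that \eqref{eq: BC for the source} and the equality of normal derivatives fix the boundary behaviour.

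This step is where we expect the main difficulty: correctly quoting or adapting the planar anisotropic result to nondivergence operators, and making sure the residual ambiguity is a pure scalar factor $c_0(x)$. The first thing to try is to write $A^{(j)}_{ab}\partial_{ab}=\partial_a(A^{(j)}_{ab}\partial_b)-(\partial_aA^{(j)}_{ab})\partial_b$. From there, the argument would use that harmonic-type solution spaces coincide through the boundary data, which forces equality of the conformal classes pointwise.

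\emph{Step 3: algebra via homogeneity and injectivity.} Let $M_j:=D^2u_0^{(j)}(x)$. Since $F$ is homogeneous of degree $m$, $DF$ is homogeneous of degree $m-1$. Set $t:=c_0^{1/(m-1)}$, which is well defined because $m\neq1$. Then $DF(M_1)=c_0DF(M_2)=DF(tM_2)$, and $tM_2$ is admissible since $\Gamma$ is a cone. Injectivity of $DF$ (Assumption~\ref{ass: homogeneous nondegeneracy}\ref{item ass_DF_inj}) yields $M_1=tM_2$. Therefore
\[
f_1=F(M_1)=t^mF(M_2)=c_0^{\frac{m}{m-1}}f_2,
\]
which is \eqref{eq: source gauge relation}. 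Non-proportionality was never used in this argument.
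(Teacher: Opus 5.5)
Your Steps~1 and~3 follow the paper; Step~3 is essentially the paper's argument verbatim. The gap is Step~2, which is the analytic core of the proposition: it is asserted rather than proved, and the heuristics you give for it do not work. Isothermal coordinates do not turn $A_{ab}\partial_{ab}$ into a conformal multiple of the Laplacian. Writing $u=\tilde u\circ\Psi$,
\[
A_{ab}\partial_{ab}u=(A_{ab}\partial_a\Psi^k\partial_b\Psi^\ell)\,\partial_{y_ky_\ell}\tilde u+(A_{ab}\partial_{ab}\Psi^k)\,\partial_{y_k}\tilde u .
\]
Isothermal $\Psi$ solve a divergence-form system, not $A_{ab}\partial_{ab}\Psi^k=0$, so a drift term survives in general. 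The same drift $-(\partial_aA_{ab})\partial_b$ appears in your proposed rewriting. Hence Sylvester and Astala--Lassas--P\"aiv\"arinta, which treat $\nabla\cdot(\sigma\nabla u)=0$ with a boundary-fixing diffeomorphism gauge, do not apply; the ``adaptation'' you mention is the entire content of the step.

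Nor is ``the operators are written in the same fixed coordinates'' a reason for the diffeomorphism to disappear. Divergence-form operators are also written in fixed coordinates and do carry that gauge. A plausible structural reason is that affine functions solve every $A_{ab}\partial_{ab}u=0$, but turning this into uniqueness still needs a genuine theorem. Your closing idea is a valid last step: coinciding solution spaces force $K_{A^{(2)}(x)}\subset K_{A^{(1)}(x)}$ by spanning Hessians, hence $A^{(1)}=c_0A^{(2)}$. This is the mechanism of Lemma~\ref{lem: proportionality from orthogonality} and Corollary~\ref{cor: global spanning Hessians}. But coincidence of the solution spaces is exactly what is missing. In the paper it is a \emph{consequence} of the conformal relation (Lemma~\ref{lem: unique sol first linearize}), not an input.

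The paper bridges this by quoting a uniqueness theorem proved specifically for planar nondivergence equations, \cite[Theorem~1.4]{LL2025IP_Monge_Ampere}, recorded as Proposition~\ref{prop: J=Id}. On a simply connected domain, equality of the \emph{conormal} DN maps $h\mapsto g_j^{ab}\partial_bv^{(j)}\nu_a|_{\partial\Omega}$ gives $g_1^{ab}=c\,g_2^{ab}$ with $c>0$ and $c|_{\partial\Omega}=1$. Your Step~1 only gives equality of $\partial_\nu v^{(j)}$, so before this theorem can be applied you need $DF(D^2u_0^{(1)})=DF(D^2u_0^{(2)})$ on $\partial\Omega$. This is where $f_1=f_2$ on $\partial\Omega$ is used, and your ``we anticipate'' must be replaced by the boundary determination of Lemma~\ref{lem: boundary determination}:
\begin{itemize}
\item equal Cauchy data of $u_0^{(j)}$ give equal $\partial_{11}$ and $\partial_{12}$ derivatives in boundary-adapted coordinates;
\item strict monotonicity of $t\mapsto F(M_0+t\,e_2\otimes e_2)$, which follows from ellipticity, together with $f_1=f_2$ on $\partial\Omega$, gives equal $\partial_{22}$ derivatives;
\item hence the Hessians, and so the values of $DF$, agree on $\partial\Omega$.
\end{itemize}
With these two ingredients in place of your Step~2, the rest of your argument goes through.
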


	\bigskip
	
	\noindent \textbf{Ideas of the proof.}
	The equality of the nonlinear DN maps allows us to compare the first linearizations at suitable reference solutions. In two dimensions, the inverse problem for the resulting non-divergence form linear equations determines the linearized coefficients only up to a conformal factor. Thus the first linearization gives $DF(D^2u_0^{(1)})=c_0DF(D^2u_0^{(2)})$ for a positive function $c_0$. This is the source of the factor appearing in Proposition~\ref{prop: determination up to gauge}. The homogeneity of $F$ and the injectivity of $DF$ then convert this relation into the source relation \eqref{eq: source gauge relation}.
	
	The main point is to show that this remaining factor is trivial under the non-proportionality condition. For this purpose, we use the second linearization. It produces an identity involving the second derivative tensor $D^2F$ and Hessians of solutions to the first linearized equation. A local construction of solutions, together with Runge approximation, gives enough Hessians at each interior point to identify the possible defect in the second derivative tensor. This yields the rank-one defect identity used in the final step. The non-proportionality condition then excludes the possibility that the conformal factor is nontrivial, and hence gives $c_0=1$ and $f_1=f_2$.
	
	This mechanism is the main novelty of the paper. The first linearization alone leaves a natural conformal ambiguity, while the second linearization detects enough additional information to remove this ambiguity under an algebraic condition on $F$. Thus the argument separates the inverse problem into two parts: determining the source up to an explicit factor, and eliminating that factor by using the structure of the nonlinearity.
	
	\bigskip
	
	\noindent \textbf{Earlier literature.}
	We also recall that inverse problems for nonlinear partial differential equations have attracted increasing attention in recent years. A classical starting point is the linearization of the nonlinear Dirichlet-to-Neumann map, going back to the work of Isakov \cite{Isa93}, which reduces the nonlinear inverse problem to a linear one. Later developments showed that higher order linearizations can reveal genuinely nonlinear effects that are invisible at first order; see, for instance, \cite{KN02, Sun96, Sun10}. A major development in this direction was the higher order linearization method introduced in nonlinear geometric settings in \cite{KLU2018}, and subsequently developed for semilinear elliptic equations in \cite{FO20, LLLS2019nonlinear}. Since then, this method has led to a broad range of uniqueness and parameter recovery results for nonlinear equations.
	
	In the elliptic setting, higher order linearization techniques have been applied to several semilinear inverse problems, including problems with partial data and related nonlinear interactions; see \cite{LLLS2019partial, LLST2022inverse, KU20a, KU20b, FLL23}. Quasilinear elliptic inverse problems have also been studied in \cite{KKU2022partial, CFKKU21, LW24_quasi}. For inverse problems related to the minimal surface equation and related quasilinear models, we refer for example to \cite{ABN20, CLLT24, Nur24}. We also mention the inverse source problems for semilinear elliptic and parabolic equations in \cite{LL24_elliptic_source, KLL_reaction_source}.
	
	Among nonlinear inverse problems, the recovery of source terms is of particular interest. In the linear case, such problems generally suffer from a natural obstruction, and uniqueness fails in general. In contrast, nonlinear equations may break this obstruction. Recent progress in this direction includes uniqueness results for inverse source problems for semilinear elliptic and parabolic equations \cite{LL24_elliptic_source, KLL_reaction_source}, and more recently for the fully nonlinear Monge-Amp\`ere equation in two dimensions \cite{LL2025IP_Monge_Ampere}. The present work continues this line of study for a broader class of admissible fully nonlinear elliptic equations.
	
	\bigskip
	
	\noindent \textbf{Organization of the article.}
	The remainder of this article is organized as follows. In Section~\ref{sec: prel}, we establish the well-posedness of the admissible Dirichlet problem and develop the higher-order linearization scheme. Section~\ref{sec:first linearization} is devoted to the analysis of the first linearization, including the boundary determination and the recovery of the linearized coefficients up to a conformal factor. In Section~\ref{sec:second linearization}, we study the second linearization and derive the key tensorial identities needed for the inverse problem. Finally, in Section~\ref{sec:inverse problem}, we combine these ingredients to prove Theorem~\ref{thm: main}.

	\section{Preliminaries}\label{sec: prel}
	
	\subsection{The well-posedness}\label{subsec: well-posedness}
	
	Let us demonstrate the well-posedness of \eqref{eq: main}. Given $\phi\in C^{\infty}(\partial\Omega)$, we introduce the following set of boundary data:
	\begin{equation}\label{admissible boundary data}
		B_{\phi,\delta}(\partial \Omega) := \left\{ \varphi \in C^{\infty}(\partial\Omega) : \| \varphi - \phi\|_{C^{4,\alpha}(\partial \Omega)} < \delta \right\},
	\end{equation}
	for some $\alpha \in (0,1)$ and sufficiently small $\delta > 0$. To solve the inverse problem \ref{Q:IP}, we employ the higher-order linearization technique, which requires the following well-posedness result describing the smooth dependence on the boundary data in the admissible class.
	
	\begin{proposition}\label{prop: well-posed}
		Let $\Omega \subset \R^2$ be a bounded open set with $C^\infty$-smooth boundary $\partial \Omega$. Let $F(M)=\widetilde f(\lambda(M))$, where $\widetilde f$ is a smooth symmetric function defined on an open convex cone $\Gamma\subset \R^2$ satisfying {\rm Assumption~\ref{assump1}}, and suppose that $\Omega$ satisfies \eqref{eq: CNS domain intro}. Let $f\in C^\infty(\overline{\Omega})$ be positive and satisfy \eqref{mini}. Given $\varphi\in C^\infty(\partial\Omega)$, the following statements hold:
		\begin{enumerate}[{\rm(i)}]
			\item\label{item 1 well-posed}
			There exists a unique admissible solution $u=u_\varphi\in C^\infty(\overline{\Omega})$ to the Dirichlet problem \eqref{eq: main}.
			
			\item\label{item 2 well-posed}
			Given $\phi \in C^\infty(\partial\Omega)$, there exist $\delta, C>0$ such that, for any $\varphi \in B_{\phi, \delta}(\partial \Omega)$ defined by \eqref{admissible boundary data}, there exists a unique admissible solution $u_\varphi\in C^{\infty}(\overline{\Omega})$ of \eqref{eq: main} satisfying
			\begin{equation}
				\|u_\varphi-u_\phi\|_{C^{4,\alpha}(\overline{\Omega})}
				\leq
				C\|\varphi-\phi\|_{C^{4,\alpha}(\partial \Omega)},
			\end{equation}
			where $u_\phi \in C^\infty(\overline{\Omega})$ is the solution to
			\begin{equation}\label{eq: main with phi Dirichlet data}
				\begin{cases}
					F(D^2u_\phi)=f &\text{ in }\Omega, \\
					u_\phi=\phi &\text{ on }\partial \Omega.
				\end{cases}
			\end{equation}
			
			\item\label{item 3 well-posed}
			Moreover, for $\varphi \in B_{\phi, \delta}(\partial \Omega)$, there exist $C^\infty$-Fr\'echet differentiable maps
			\begin{equation}\label{solution map}
				\begin{split}
					S&: B_{\phi,\delta}(\partial \Omega) \to C^{\infty}(\overline{\Omega}), \quad \varphi \mapsto u_{\varphi}, \\
					\Lambda&: B_{\phi,\delta}(\partial \Omega) \to C^{\infty}(\partial\Omega), \quad \varphi\mapsto \left. \partial_{\nu}u_{\varphi} \right|_{\partial\Omega}.
				\end{split}
			\end{equation}
		\end{enumerate}
	\end{proposition}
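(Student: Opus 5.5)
The statement to prove is Proposition~\ref{prop: well-posed}, the well-posedness result. I would prove it in three steps: existence and uniqueness from the Caffarelli--Nirenberg--Spruck theory, regularity from Evans--Krylov and Schauder, and smooth dependence from the implicit function theorem.

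\textbf{Step 1: existence and uniqueness.} Under Assumption~\ref{assump1} and the domain condition \eqref{eq: CNS domain intro}, \cite[Theorem~2]{CNS_nonlinear_Hessian} gives an admissible solution $u_\varphi\in C^{\infty}(\overline{\Omega})$ of \eqref{eq: main} for $f>0$ smooth satisfying \eqref{mini}. Concavity together with the a priori $C^2$ bound yields $C^{2,\alpha}$ regularity via Evans--Krylov. Bootstrapping with Schauder estimates then gives smoothness up to the boundary.

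Uniqueness in the admissible class follows from a comparison argument. Let $u,v$ be two admissible solutions and set $w=u-v$. Because $\mathcal A_\Gamma$ is convex, the segment $tD^2u+(1-t)D^2v$ stays admissible, so
\[
0=F(D^2u)-F(D^2v)=a_{ab}(x)\,\partial_{ab}w,
\qquad
a_{ab}=\int_0^1F_{ab}\big(tD^2u+(1-t)D^2v\big)\,dt .
\]
The matrix $(a_{ab})$ is positive definite by ellipticity, and $w=0$ on $\partial\Omega$, so the maximum principle gives $w\equiv0$. This proves \ref{item 1 well-posed}.

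\textbf{Step 2: local well-posedness near $u_\phi$.} Define
\[
\mathcal F:C^{4,\alpha}(\overline\Omega)\times C^{4,\alpha}(\partial\Omega)\to C^{2,\alpha}(\overline\Omega)\times C^{4,\alpha}(\partial\Omega),
\qquad
\mathcal F(u,\varphi)=\big(F(D^2u)-f,\ u|_{\partial\Omega}-\varphi\big).
\]
This map is defined on the open set where $\lambda(D^2u)\in\Gamma$ on $\overline\Omega$. Admissibility is an open condition in $C^2$ because $\overline\Omega$ is compact, and $\mathcal F$ is $C^\infty$ since $F$ is smooth on $\mathcal A_\Gamma$.

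The partial derivative in $u$ at $(u_\phi,\phi)$ is $w\mapsto \big(F_{ab}(D^2u_\phi)\partial_{ab}w,\ w|_{\partial\Omega}\big)$. Its coefficients are smooth and uniformly elliptic, and the equation has no zeroth-order term. By Schauder theory it is therefore an isomorphism $C^{4,\alpha}\to C^{2,\alpha}\times C^{4,\alpha}$. The implicit function theorem then gives $\delta>0$ and a $C^\infty$ map $\varphi\mapsto u_\varphi$ on $B_{\phi,\delta}$ with the Lipschitz bound in \ref{item 2 well-posed}. Shrinking $\delta$ keeps $u_\varphi$ admissible, and by Step~1 it coincides with the unique admissible solution. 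For smooth $\varphi$ it is smooth by the bootstrap in Step~1.

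\textbf{Step 3: smoothness of the maps.} The solution map $S$ is the implicit function, so it is Fr\'echet smooth into $C^{4,\alpha}$. The map $\Lambda=\partial_\nu\circ S$ is a composition with a bounded linear map, hence also smooth.

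The main obstacle is the phrase ``$C^\infty$ into $C^\infty$'' in \ref{item 3 well-posed}. I would handle it by repeating Step~2 with $C^{k,\alpha}$ in place of $C^{4,\alpha}$ for every $k$: the linearized operator is an isomorphism at each level, and uniqueness makes the solution maps agree across $k$. This gives smooth dependence in every $C^{k,\alpha}$ norm, which is the meaning of ``$C^\infty$'' here. The only delicate point is that $\delta$ may have to shrink with $k$. If so, I would state \ref{item 3 well-posed} with the radius $\delta_k$ chosen for the $C^{4,\alpha}$ ball and derivatives understood in every $C^{k,\alpha}$ topology on that ball. This suffices because the higher-order linearization only uses derivatives at $\varepsilon=0$.
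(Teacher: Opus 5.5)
Your proposal is correct and follows essentially the same route as the paper: existence from Caffarelli--Nirenberg--Spruck plus bootstrap regularity, the implicit function theorem for $(u,\varphi)\mapsto(F(D^2u)-f,\,u|_{\partial\Omega}-\varphi)$ from $C^{4,\alpha}$ into $C^{2,\alpha}\times C^{4,\alpha}$ via the Schauder isomorphism, and $\Lambda=\partial_\nu\circ S$; your comparison proof of uniqueness and your level-by-level $C^{k,\alpha}$ argument for (iii) are more explicit than the paper, which cites CNS for uniqueness and simply asserts smoothness into $C^\infty$. You do not actually need $\delta$ to shrink with $k$: each $u_{\varphi_0}$ with $\varphi_0\in B_{\phi,\delta}(\partial\Omega)$ is smooth and admissible, so the $C^{k,\alpha}$ implicit function theorem applies around every such $\varphi_0$, and uniqueness of admissible solutions glues these local maps into $S$.
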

	
	For the well-posedness, we only use Assumption~\ref{assump1}, not the additional assumptions in Assumption~\ref{ass: homogeneous nondegeneracy} needed for the inverse problem.
	
	\begin{proof}[Proof of Proposition \ref{prop: well-posed}]
		For \ref{item 1 well-posed}, the existence and uniqueness of an admissible solution follow from \cite[Theorem~2]{CNS_nonlinear_Hessian}. The assumptions imposed above correspond to the hypotheses required there: the cone condition, symmetry, ellipticity, concavity, the boundary-side condition \eqref{mini}, the growth conditions, and the boundary admissibility condition \eqref{eq: CNS domain intro}. Therefore, \eqref{eq: main} admits a unique admissible classical solution. Since $f$, $\varphi$, and $\partial\Omega$ are smooth, standard elliptic regularity and bootstrap imply that this admissible solution is smooth up to the boundary. Hence $u_\varphi\in C^\infty(\overline{\Omega})$.
		
		For \ref{item 2 well-posed}, we first note that the solution $u_\phi \in C^\infty(\overline{\Omega})$ is guaranteed by \ref{item 1 well-posed}. We now study the local dependence of the admissible solution on the boundary data by the implicit function theorem for Banach spaces; see, for example, \cite[Theorem 10.6]{renardy2006introduction}. Let
		\[
		X:= C^{4,\alpha}(\overline{\Omega}), \quad Y:=C^{2,\alpha}(\overline{\Omega})\times C^{4,\alpha}(\partial \Omega).
		\]
		Since $u_\phi$ is admissible and the cone $\Gamma$ is open, there exists an open neighborhood $\mathcal U\subset X$ of $u_\phi$ such that $\lambda(D^2u(x))\in \Gamma$ for all $u\in \mathcal U$ and all $x\in \overline{\Omega}$. Thus every $u\in \mathcal U$ remains admissible. Consider the map
		\begin{equation}\label{mapping prop in well-posed}
			\Psi: C^{4,\alpha}(\partial\Omega)\times \mathcal U\to Y, \quad \Psi(\varphi,u):= \left(F(D^2 u), u|_{\partial\Omega}-\varphi \right).
		\end{equation}
		Since $F$ is smooth on the admissible class, the map $\Psi$ is well defined and $C^\infty$-smooth in the Fr\'echet sense.
		
		From \eqref{eq: main with phi Dirichlet data}, we have
		\[
		\Psi(\phi,u_\phi)=(F(D^2 u_\phi),u_\phi|_{\partial\Omega}-\phi)=(f,0),
		\]
		and the partial differential with respect to $u$ is given by
		\begin{equation}
			\begin{split}
				\partial_u \Psi(\phi,u_\phi)&: X\to Y, \\
				\partial_u \Psi (\phi,u_\phi)v  &= \big( F_{ab}(D^2u_\phi) \partial_{ab}v ,  v|_{\partial \Omega} \big),
			\end{split}
		\end{equation}
		for any $v\in X$. Since $u_\phi$ is admissible and smooth up to the boundary, the image $\{D^2u_\phi(x):x\in\overline{\Omega}\}$ is a compact subset of the admissible class. By the ellipticity condition in Assumption~\ref{assump1}{\rm(i)}, the matrix $\big(F_{ab}(D^2u_\phi(x))\big)$ is positive-definite for every $x\in\overline{\Omega}$, and hence uniformly positive-definite on $\overline{\Omega}$. Therefore $F_{ab}(D^2u_\phi)\partial_{ab}$ is a uniformly elliptic operator in $\Omega$.
		
		We now claim that
		\begin{equation}
			\partial_u \Psi (\phi,u_\phi): X \to Y, \quad v\mapsto \big( F_{ab}(D^2u_\phi) \partial_{ab}v ,  v|_{\partial \Omega} \big)
		\end{equation}
		is a linear isomorphism. Indeed, consider the Dirichlet problem
		\begin{equation}\label{equ Schauder in well-posed}
			\begin{cases}
				F_{ab}(D^2u_\phi)\partial_{ab}v= G &\text{ in }\Omega, \\
				v=\psi &\text{ on }\partial\Omega,
			\end{cases}
		\end{equation}
		where $G\in C^{2,\alpha}(\overline{\Omega})$ and $\psi\in C^{4,\alpha}(\partial\Omega)$. As in \cite[Section 2]{LL2025IP_Monge_Ampere}, and using the standard solvability theory for linear uniformly elliptic equations in non-divergence form together with the Schauder estimates in \cite[Chapter 6]{gilbarg2015elliptic}, the Dirichlet problem \eqref{equ Schauder in well-posed} has a unique solution $v\in C^{2,\alpha}(\overline{\Omega})$. Moreover, the global Schauder estimates imply that $v\in C^{4,\alpha}(\overline{\Omega})$. Hence \(\partial_u \Psi(\phi,u_\phi)\) is a linear isomorphism from $X$ onto $Y$.
		
		Therefore, by the implicit function theorem, there exist $\delta>0$ and a unique $C^\infty$-Fr\'echet differentiable map
		\[
		S: B_{\phi,\delta}(\partial \Omega) \to C^{\infty}(\overline{\Omega}), \quad \varphi \mapsto S(\varphi),
		\]
		such that $S(\phi)=u_\phi$ and $\Psi(\varphi, S(\varphi))=(f,0)$ for all $\varphi \in B_{\phi,\delta}(\partial \Omega)$, provided that $\delta>0$ is sufficiently small. Writing $u_\varphi:=S(\varphi)$, we obtain a unique admissible solution near $u_\phi$ for all nearby boundary data. Since $S$ is locally Lipschitz and $S(\phi)=u_\phi$, there exists a constant $C>0$ such that
		\begin{equation}
			\|u_\varphi-u_\phi\|_{C^{4,\alpha}(\overline{\Omega})} \leq C \|\varphi-\phi\|_{C^{4,\alpha}(\partial\Omega)}.
		\end{equation}
		This proves \ref{item 2 well-posed}.
		
		Finally, \ref{item 3 well-posed} follows from the $C^\infty$-Fr\'echet smoothness of the solution map $S$ and the fact that the normal trace map $u\mapsto \partial_\nu u|_{\partial\Omega}$ is continuous and linear from $C^{4,\alpha}(\overline{\Omega})$ to $C^{3,\alpha}(\partial\Omega)$. Hence it defines a smooth map into $C^\infty(\partial\Omega)$ on the smooth solutions under consideration. This completes the proof.
	\end{proof}

	\subsection{The higher order linearization}
	
	Inspired by \cite{LLLS2019nonlinear,FO20}, let us apply the higher order linearization technique to \eqref{eq: main}, with the Dirichlet data
	\begin{equation}\label{eq: Dirichlet data}
		\varphi=\phi+\varepsilon_1\phi_1+\varepsilon_2\phi_2 \text{ on } \partial\Omega,
	\end{equation}
	where $\phi,\phi_1,\phi_2\in C^\infty(\partial\Omega)$ are given functions, and $\varepsilon_k\in \R$ are parameters with $|\varepsilon_k|$ sufficiently small for $k=1,2$. By Proposition~\ref{prop: well-posed}, for sufficiently small $\varepsilon=(\varepsilon_1,\varepsilon_2)$, there exists a unique admissible solution $u_\varepsilon\in C^\infty(\overline{\Omega})$ to
	\begin{equation}\label{eq: main with epsilon}
		\begin{cases}
			F(D^2 u_\varepsilon)=f & \text{ in }\Omega, \\
			u_\varepsilon=\phi+\varepsilon_1 \phi_1+\varepsilon_2\phi_2 & \text{ on }\partial\Omega.
		\end{cases}
	\end{equation}
	
	\subsubsection{The first linearization}
	
	Let $u_\phi$ be the admissible solution to
	\begin{equation}\label{eq: main u_phi}
		\begin{cases}
			F(D^2u_\phi)=f & \text{ in }\Omega, \\
			u_\phi=\phi & \text{ on }\partial\Omega.
		\end{cases}
	\end{equation}
	Differentiating \eqref{eq: main with epsilon} with respect to $\varepsilon_k$ at $\varepsilon=0$, we obtain the first linearized equation
	\begin{equation}\label{eq: first linearization}
		\begin{cases}
			F_{ab}(D^2u_\phi)\partial_{ab} v_k =0 & \text{ in }\Omega,\\
			v_k=\phi_k & \text{ on }\partial\Omega,
		\end{cases}
	\end{equation}
	where
	\[
	v_k=\left. \partial_{\varepsilon_k} \right|_{\varepsilon=0} u_\varepsilon,
	\quad k=1,2.
	\]
	Since $u_\phi$ is admissible and smooth up to the boundary, the set $\{D^2u_\phi(x): x\in \overline{\Omega}\}$ is a compact subset of the admissible class. By the ellipticity condition in Assumption~\ref{assump1} \ref{item 1 - elliptic}, the matrix $\big(F_{ab}(D^2u_\phi(x))\big)$ is positive definite for every $x\in\overline{\Omega}$, and hence uniformly positive definite on $\overline{\Omega}$. Therefore the operator $F_{ab}(D^2u_\phi)\partial_{ab}$ is uniformly elliptic in $\Omega$.

	\subsubsection{The second linearization}

	Using the Dirichlet data \eqref{eq: Dirichlet data} and applying the second derivative $\p_{\eps_1\eps_2}\big|_{\eps=0}$ on the equation \eqref{eq: main with epsilon}, then one has the following second linearized equation 
	\begin{equation}\label{eq: second linearization}
		\begin{cases}
			g^{ab}\p_{ab} w + g^{ab,k\ell} \p_{ab}v_1 \p_{k\ell}v_2 =0  &\text{ in }\Omega, \\
			w=0 &\text{ on }\p\Omega,
		\end{cases}
	\end{equation}
	where $v_k$ is the solution to \eqref{eq: first linearization} for $k=1,2$, and $w=\left. \p_{\eps_1\eps_2} \right|_{\eps=0}u_\eps $. Here, 
	\begin{equation}\label{eq: metric gabkl}
		g^{ab} : = F_{ab}(D^2u_\phi)\;\;\text{and}\;\; g^{ab,k\ell} : = F_{ab,k\ell}(D^2u_\phi)\;\; \text{for}\;\; x\in\overline{\Omega},
	\end{equation}
	for all $a,b,k,\ell=1,2$, where $u_\phi$ is the admissible solution to \eqref{eq: main u_phi}.
	In the rest of the paper, we will adopt these notations to simplify the analysis.

	\section{The first linearization}\label{sec:first linearization}
	
	We first extract useful information from the first derivative of $F$ at suitable admissible solutions.
	
	\subsection{Boundary determination}
	
	Let us first show the boundary determination of $F$ on $\partial \Omega$. To this end, given $\phi \in C^\infty(\partial\Omega)$, let us consider the following admissible Dirichlet problem
	\begin{equation}\label{eq: main j=1,2 zero}
		\begin{cases}
			F(D^2 u_\phi^{(j)})=f_j &\text{ in }\Omega, \\
			u^{(j)}_\phi=\phi &\text{ on }\partial\Omega,
		\end{cases}
	\end{equation}
	for $j=1,2$.
	
	\begin{lemma}[Boundary determination]\label{lem: boundary determination}
		Adopting the assumptions in {\rm Theorem \ref{thm: main}}, the relations \eqref{eq: BC for the source} and \eqref{DN map same} imply
		\begin{equation}\label{eq: boundary determination of u_0}
			F_{ab}(D^2u_\phi^{(1)})\big|_{\partial\Omega}=F_{ab}(D^2u_\phi^{(2)})\big|_{\partial\Omega},
		\end{equation}
		where $F_{ab}$ is given by \eqref{eq: F_first derivative} for $1\le a,b\le 2$, and $u_\phi^{(j)}\in C^\infty(\overline{\Omega})$ is the admissible solution to \eqref{eq: main j=1,2 zero}, for $j=1,2$.
	\end{lemma}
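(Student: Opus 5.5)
The plan is to show that the full Hessians agree on the boundary, $D^2u_\phi^{(1)}=D^2u_\phi^{(2)}$ on $\partial\Omega$, and then apply $F_{ab}$ to both sides. Set $u_j:=u_\phi^{(j)}$ and $w:=u_1-u_2$.

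First we collect the Cauchy data of $w$. Both functions equal $\phi$ on $\partial\Omega$, so $w=0$ there. The assumption \eqref{DN map same} gives $\partial_\nu u_1=\Lambda_{f_1}(\phi)=\Lambda_{f_2}(\phi)=\partial_\nu u_2$, hence $\partial_\nu w=0$ on $\partial\Omega$. Consequently $\nabla w=0$ on $\partial\Omega$.

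Next we identify the second derivatives on the boundary. Work at $x_0\in\partial\Omega$ with orthonormal unit tangent $T$ and unit normal $\nu$. Since $\nabla w$ vanishes identically on $\partial\Omega$, differentiating it tangentially gives $\partial_T\nabla w=0$, so $D^2w(x_0)T=0$. By symmetry this means $D^2w(x_0)=\mu\,\nu\otimes\nu$ for some scalar $\mu=\partial_{\nu\nu}w(x_0)$. Thus every Hessian entry of $u_1$ and $u_2$ agrees at $x_0$, except possibly the $\nu\nu$ component. So we have $D^2u_1=D^2u_2+\mu\,\nu\otimes\nu$ at $x_0$.

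The remaining step uses the equation to force $\mu=0$. By \eqref{eq: BC for the source}, $F(D^2u_1)=f_1=f_2=F(D^2u_2)$ at $x_0$. Consider the function $t\mapsto F(D^2u_2(x_0)+t\,\nu\otimes\nu)$ on the segment $t\in[0,\mu]$ (or $[\mu,0]$). Its derivative is $F_{ab}\nu_a\nu_b$.

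The main obstacle is making sure the whole segment stays in the admissible set $\mathcal A_\Gamma$. We can avoid this by taking the segment in the direction that increases $\nu\otimes\nu$. If $\mu>0$, we start from $D^2u_2(x_0)$, which is admissible. Adding a positive semidefinite matrix keeps the eigenvalues in $\Gamma$, since $\Gamma$ is a convex cone containing $\Gamma_+$ and the eigenvalues increase monotonically, so $\Gamma+\overline{\Gamma_+}\subset\Gamma$. If $\mu<0$, we swap the roles and start from $D^2u_1(x_0)$. In either case the segment lies in $\mathcal A_\Gamma$.

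On that segment $(F_{ab})$ is positive definite, so the derivative $F_{ab}\nu_a\nu_b$ is strictly positive there. Hence $F$ is strictly increasing along the segment, and equality of the endpoint values forces $\mu=0$. Therefore $D^2u_1(x_0)=D^2u_2(x_0)$, and applying $F_{ab}$ gives \eqref{eq: boundary determination of u_0}. Since $x_0\in\partial\Omega$ was arbitrary, this proves the lemma.
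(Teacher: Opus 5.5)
Your proposal is correct and follows essentially the same route as the paper. Equality of the Cauchy data fixes every Hessian entry at $x_0$ except the $\nu\nu$ one, and strict monotonicity of $t\mapsto F(M+t\,\nu\otimes\nu)$, together with $f_1=f_2$ on $\partial\Omega$, removes that last one. The only difference is how admissibility of the segment is secured: the paper invokes convexity of $\mathcal A_\Gamma$ in $\cS$, while you use Weyl monotonicity of eigenvalues and $\Gamma+\overline{\Gamma_+}\subset\Gamma$. Your argument is slightly more elementary and equally valid.
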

	
	\begin{proof}
		Recall that $\mathcal A_\Gamma$ denotes the admissible class. Fix a boundary point $x_0\in \partial\Omega$. Without loss of generality, we may assume that $x_0=(0,0)$, and the boundary near $x_0$ is parameterized by $x_2=\eta(x_1)$ for $x_1\in (-\varepsilon,\varepsilon)$, where $\eta\in C^\infty((-\varepsilon,\varepsilon))$ and $\eta(0)=\eta'(0)=0$.
		
		Since $u_\phi^{(1)}=u_\phi^{(2)}=\phi$ on $\partial\Omega$ and the DN maps agree, we have $\partial_\nu u_\phi^{(1)}=\partial_\nu u_\phi^{(2)}$ on $\partial\Omega$. Hence the tangential and normal derivatives of $u_\phi^{(1)}$ and $u_\phi^{(2)}$ coincide on $\partial\Omega$, and therefore $\nabla u_\phi^{(1)}=\nabla u_\phi^{(2)}$ on $\partial\Omega$. In particular,
		\[
		\partial_1u_\phi^{(1)}(x_1,\eta(x_1))=\partial_1u_\phi^{(2)}(x_1,\eta(x_1))
		\]
		and
		\[
		\partial_2u_\phi^{(1)}(x_1,\eta(x_1))=\partial_2u_\phi^{(2)}(x_1,\eta(x_1))
		\]
		for $x_1$ near $0$. Differentiating these identities with respect to $x_1$ and evaluating at $x_1=0$, we obtain
		\[
		\partial_{11}u_\phi^{(1)}(0)+\eta'(0)\partial_{12}u_\phi^{(1)}(0)
		=
		\partial_{11}u_\phi^{(2)}(0)+\eta'(0)\partial_{12}u_\phi^{(2)}(0)
		\]
		and
		\[
		\partial_{12}u_\phi^{(1)}(0)+\eta'(0)\partial_{22}u_\phi^{(1)}(0)
		=
		\partial_{12}u_\phi^{(2)}(0)+\eta'(0)\partial_{22}u_\phi^{(2)}(0).
		\]
		Since $\eta'(0)=0$, it follows that
		\begin{equation}\label{eq: boundary second derivatives 11 12}
			\partial_{11}u_\phi^{(1)}(0)=\partial_{11}u_\phi^{(2)}(0)
			\quad \text{and} \quad
			\partial_{12}u_\phi^{(1)}(0)=\partial_{12}u_\phi^{(2)}(0).
		\end{equation}
		It remains to show that $\partial_{22}u_\phi^{(1)}(0)=\partial_{22}u_\phi^{(2)}(0)$.
		
		Write $t_j:=\partial_{22}u_\phi^{(j)}(0)$ for $j=1,2$, and define
		\[
		M_0=
		\begin{pmatrix}
			m_0^{(11)} & m_0^{(12)} \\
			m_0^{(12)} & 0
		\end{pmatrix},
		\]
		where $m_0^{(11)}=\partial_{11}u_\phi^{(1)}(0)=\partial_{11}u_\phi^{(2)}(0)$ and $m_0^{(12)}=\partial_{12}u_\phi^{(1)}(0)=\partial_{12}u_\phi^{(2)}(0)$. Then $D^2u_\phi^{(j)}(0)=M_0+t_j\,e_2\otimes e_2$ for $j=1,2$, where $e_2=(0,1)$. Since $u_\phi^{(j)}$ is admissible, both matrices $D^2u_\phi^{(j)}(0)$ belong to $\mathcal A_\Gamma$. Consider
		\[
		I:=\{t\in \mathbb R:\ M_0+t\,e_2\otimes e_2\in \mathcal A_\Gamma\}.
		\]
		Since $\mathcal A_\Gamma$ is an open convex subset of $\cS$, the set $I$ is an open interval containing $t_1$ and $t_2$.
		
		Define $g(t):=F(M_0+t\,e_2\otimes e_2)$ for $t\in I$. Then $g$ is well defined on $I$, and by the chain rule,
		\[
		g'(t)=F_{22}(M_0+t\,e_2\otimes e_2), \quad t\in I.
		\]
		By ellipticity in the admissible class, the matrix $\big(F_{ab}(M)\big)$ is positive definite for every $M\in \mathcal A_\Gamma$. In particular, $g'(t)=F_{22}(M_0+t\,e_2\otimes e_2)>0$ for all $t\in I$. Hence $g$ is strictly increasing on $I$. On the other hand, by construction,
		\[
		g(t_j)=F(D^2u_\phi^{(j)}(0))=f_j(0), \quad j=1,2.
		\]
		Since $x_0=0\in \partial\Omega$ and $f_1=f_2$ on $\partial\Omega$, we have $g(t_1)=f_1(0)=f_2(0)=g(t_2)$. Because $g$ is strictly increasing on $I$, it follows that $t_1=t_2$. Therefore $\partial_{22}u_\phi^{(1)}(0)=\partial_{22}u_\phi^{(2)}(0)$.
		
		Together with \eqref{eq: boundary second derivatives 11 12}, this implies
		\begin{equation}\label{eq: uniqueness of the Hessian on the boundary}
			D^2u_\phi^{(1)}(0)=D^2u_\phi^{(2)}(0).
		\end{equation}
		Since $x_0\in \partial\Omega$ was arbitrary, we conclude that $D^2u_\phi^{(1)}\big|_{\partial\Omega}=D^2u_\phi^{(2)}\big|_{\partial\Omega}$. Because $F$ is known a priori, this yields
		\[
		F_{ab}(D^2u_\phi^{(1)})\big|_{\partial\Omega}
		=
		F_{ab}(D^2u_\phi^{(2)})\big|_{\partial\Omega},
		\]
		for $1\le a,b\le 2$. This proves the lemma.
	\end{proof}
	
	\begin{remark}
		Note that \eqref{eq: uniqueness of the Hessian on the boundary} already implies not only
		\[
		F_{ab}(D^2u_\phi^{(1)})\big|_{\partial\Omega}=F_{ab}(D^2u_\phi^{(2)})\big|_{\partial\Omega},
		\]
		but also
		\[
		D^kF_{ab}(D^2u_\phi^{(1)})\big|_{\partial\Omega}=D^kF_{ab}(D^2u_\phi^{(2)})\big|_{\partial\Omega}
		\]
		for all $k\in \mathbb N$, since the nonlinear functional $F$ is known a priori. In addition, the concavity of $F$ is not used in the derivation of the boundary determination.
	\end{remark}
	
	\subsection{Interior recovery up to a conformal factor}
	
	First, since we know the DN maps of \eqref{eq: main}, we obtain the boundary information of $\nabla u|_{\partial\Omega}$ for any admissible solution $u$ to \eqref{eq: main}. Second, thanks to the boundary determination, we also know the matrix $\big(F_{ab}(D^2u_\phi(x))\big)_{1\le a,b\le 2}$ for $x\in \partial\Omega$. Thus, we can obtain the following alternative DN map for the linearized equation
	\begin{equation}\label{eq: DN first linearization}
		\Lambda'_g : C^\infty(\partial\Omega)\to C^\infty(\partial\Omega), \quad \phi \mapsto g^{ab}\partial_b v \nu_a \big|_{\partial\Omega},
	\end{equation}
	where $v\in C^\infty(\overline{\Omega})$ is the solution to \eqref{eq: first linearization}, $\nu=(\nu_1,\nu_2)$ denotes the unit outer normal on $\partial \Omega$.
	
	Now, we reduce our original inverse problem \ref{Q:IP} to the inverse problem for the first linearized equation \eqref{eq: first linearization}. More concretely, we use the information in \eqref{DN map same} to determine the metric $g$ whenever $\Omega\subset \R^2$ is a bounded simply connected domain. Therefore, we can apply \cite[Theorem 1.4]{LL2025IP_Monge_Ampere} to recover the metric up to a conformal factor $c>0$ in $\Omega$, with $c|_{\partial\Omega}=1$. For the reader's convenience, we record the following result, which gives the recovery of the metric $g$ in the non-divergence elliptic equation.
	
	\begin{proposition}[\text{\cite[Theorem 1.4]{LL2025IP_Monge_Ampere}}]\label{prop: J=Id}
		Let $\Omega \subset \R^2$ be a bounded open simply connected domain with $C^\infty$-smooth boundary $\partial\Omega$, and let $g=\big(g_{ab}\big)$ be a symmetric, positive definite, and $C^\infty$-smooth $2\times 2$ matrix-valued function and $(g^{ab})=(g_{ab})^{-1}$. Let $g=g_j$ be two metrics, and let
		\begin{equation}\label{eq: linearized DN map}
			\Lambda_{g_j}' : C^\infty(\partial\Omega)\to C^\infty(\partial\Omega), \quad h\mapsto g_j^{ab}\partial_b v^{(j)}\nu_a \big|_{\partial\Omega},
		\end{equation}
		be the DN map of the equation
		\begin{equation}\label{eq: non-divergence elliptic}
			\begin{cases}
				g^{ab}_j\partial_{ab} v^{(j)}=0 &\text{ in }\Omega,\\
				v^{(j)} = h &\text{ on }\partial \Omega,
			\end{cases}
		\end{equation}
		for $j=1,2$. Suppose that
		\begin{equation}
			\Lambda'_{g_1}(h)=\Lambda'_{g_2}(h) \quad \text{for all } h \in C^\infty(\partial \Omega),
		\end{equation}
		then there exists a $C^\infty$-smooth conformal factor $c=c(x)>0$ with $c|_{\partial \Omega}=1$ such that
		\begin{equation}\label{g_1 = c g_2 in intro}
			g_1^{ab}=cg_2^{ab} \text{ in }\Omega, \quad 1\le a,b\le 2.
		\end{equation}
	\end{proposition}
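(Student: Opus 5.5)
The natural approach is to separate the conformal class of $g_j$ from its scale, since the scale is invisible to the equation. Multiplying \eqref{eq: non-divergence elliptic} by a positive function does not change its solutions. So I would first normalize $a_j^{ab}:=g_j^{ab}/\sqrt{\det(g_j^{ab})}$, which gives $\det(a_j^{ab})=1$, and note that the solution spaces of $g_j^{ab}\partial_{ab}v=0$ and $a_j^{ab}\partial_{ab}v=0$ coincide. The goal is then to show $a_1=a_2$ in $\Omega$. Setting $c:=\sqrt{\det g_1^{ab}/\det g_2^{ab}}$ then yields \eqref{g_1 = c g_2 in intro}.

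\textbf{Boundary determination.} I would show that $g_1^{ab}=g_2^{ab}$ on $\partial\Omega$, which gives $c|_{\partial\Omega}=1$. Equal DN maps mean the Cauchy data of the two equations agree. Testing with oscillating boundary data $h=e^{i\lambda s}$ along $\partial\Omega$, I would construct boundary-layer approximate solutions (the usual Lee--Uhlmann factorization of the DN map). The principal symbol of $\Lambda'_{g_j}$ then determines $g_j^{ab}$ at boundary points. The symbol involves the conormal components $g^{ab}\nu_a$, which appear explicitly in \eqref{eq: linearized DN map}, so the full metric at the boundary is recovered rather than only its conformal class.

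\textbf{Interior identity.} Given $h$, let $v^{(1)},v^{(2)}$ solve the two equations and set $v:=v^{(1)}-v^{(2)}$. Since the DN maps agree, $v$ has zero Cauchy data, and
\[
g_2^{ab}\partial_{ab}v=-(g_1^{ab}-g_2^{ab})\partial_{ab}v^{(1)}\quad\text{in }\Omega .
\]
Pairing with solutions $w$ of the adjoint equation $\partial_{ab}(g_2^{ab}w)=0$ and integrating by parts, the boundary terms vanish because $v$ has zero Cauchy data and the boundary values of $g_j$ agree. This gives
\[
\int_\Omega (g_1^{ab}-g_2^{ab})\,\partial_{ab}v^{(1)}\,w\,dx=0
\]
for all such $v^{(1)}$ and $w$.

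\textbf{Pointwise conclusion (main obstacle).} The hardest step is converting this integral identity into a pointwise one. Fix $x_0\in\Omega$. By local solvability of the elliptic equation with prescribed $2$-jet, together with Runge approximation (which relies on unique continuation for the uniformly elliptic operators, available in two dimensions), I would approximate local solutions $v^{(1)}$ whose Hessian at $x_0$ is an arbitrary $H$ with $g_1(x_0):H=0$. These Hessians fill the two-dimensional subspace $g_1(x_0)^\perp$. I would also approximate positive adjoint solutions $w$ that concentrate near $x_0$, for example by Runge-approximating localized adjoint solutions obtained by freezing coefficients. In the limit this should give
\[
(g_1-g_2)(x_0):H=0\quad\text{for all } H\perp g_1(x_0).
\]
Hence $g_1-g_2\in\operatorname{span}\{g_1\}$ at $x_0$, that is, $g_2=\kappa g_1$ there, and positivity forces $\kappa>0$.

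If the concentration argument for $w$ proves too delicate, I would fall back on the quasiconformal route. Solve the Beltrami equation for $a_j$ to obtain isothermal coordinates $\Phi_j$, fixed on $\partial\Omega$ by the boundary determination. The zero Cauchy data of $v$, combined with unique continuation across an extension of $\Omega$ where the metrics agree, should force $\Phi_1=\Phi_2$, and hence $a_1=a_2$.

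Smoothness of $c$ follows from smoothness of $g_j$.
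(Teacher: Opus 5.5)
The paper does not prove this statement. It imports it from \cite{LL2025IP_Monge_Ampere}, so your argument has to stand on its own, and there is a genuine gap at the step you yourself flag. Your normalization, the integral identity $\int_\Omega (g_1-g_2):D^2v^{(1)}\,w\,dx=0$, and the local spanning of Hessians are fine.

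To conclude $(g_1-g_2)(x_0):H=0$ for every symmetric $H$ with $g_1(x_0):H=0$, you need products $w\,D^2v^{(1)}$ of \emph{global} solutions in $\Omega$ to approximate $H\,\delta_{x_0}$. This approximation must hold when tested against the unknown field $g_1-g_2$ over all of $\Omega$. Neither of your tools delivers this:
\begin{itemize}
\item Positive solutions of $\partial_{ab}(g_2^{ab}w)=0$ obey Harnack's inequality $\sup_K w\le C_K\inf_K w$ on compact $K\subset\Omega$. So no family of them concentrates at $x_0$, and the same holds for frozen-coefficient local solutions inside their balls.
\item Runge approximation controls $v^{(1)}$ and $w$ only on a compact subset of a small ball. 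Elsewhere the global solutions are uncontrolled, and since $g_1-g_2$ is not known to vanish there, the contribution of the rest of $\Omega$ is not small.
\end{itemize}
The paper's localization (Proposition~\ref{prop: spanning Hessians at a point}, Proposition~\ref{prop: weak Runge approximation}, Corollary~\ref{cor: global spanning Hessians}) works only because \eqref{eq: gauge invariant derivative 4} is a \emph{pointwise} identity valid for every global solution. It does not transfer to an integral identity. What you actually need is a density theorem for such products, which in two dimensions comes from complex geometrical optics solutions with holomorphic phases. But $v^{(1)}$ and $w$ solve equations with principal parts $g_1$ and $g_2$. Their exponential weights cancel only if $g_1$ and $g_2$ induce the same complex structure, which is exactly what you are trying to prove, so this route is circular as it stands.

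Your quasiconformal fallback does not close the gap either. The isothermal coordinates $\Phi_j$ of $a_j$ solve the divergence-form equation $\nabla\cdot(a_j\nabla\Phi_j)=0$, not $a_j:D^2\Phi_j=0$. So they are not among the solutions whose Cauchy data you measure, and boundary values of $g_j$ say nothing about $\Phi_j|_{\partial\Omega}$. Moreover, since $g_j^{ab}\partial_{ab}\Phi_j\neq0$ in general, the operator becomes $\rho_j(\Delta+Y_j\cdot\nabla)$ in these coordinates, with a drift $Y_j$. Making this route work would require two things you do not supply: a Sylvester/Astala--Lassas--P\"aiv\"arinta-type identification $\Phi_1=\Phi_2$ on $\partial\Omega$, and a two-dimensional uniqueness theorem for first-order terms.

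Your boundary step is also justified incorrectly. Freeze coefficients at a boundary point, with boundary $\{x_2=0\}$ and $\Omega$ on the side $x_2>0$. The decaying solutions are $e^{i\xi x_1+i\zeta x_2}$ with $g^{22}\zeta=-g^{12}\xi+i|\xi|\sqrt{\det(g^{ab})}$. In the conormal derivative the $g^{12},g^{22}$ contributions cancel, leaving principal symbol $\sqrt{\det(g^{ab})}\,|\xi|$, so the symbol sees only the determinant. The conclusion is easy to rescue:
\begin{itemize}
\item Affine functions solve every equation $g^{ab}\partial_{ab}v=0$, so $\Lambda'_{g_j}(x_k|_{\partial\Omega})=g_j^{ak}\nu_a$. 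Hence $g_1\nu=g_2\nu$ on $\partial\Omega$.
\item Together with the determinant from the symbol, this gives $g_1=g_2$ on $\partial\Omega$.
\item The relation $g_1\nu=g_2\nu$ alone already gives the zero Cauchy data of $v$, and $c|_{\partial\Omega}=1$ once $g_1=cg_2$ is known.
\end{itemize}
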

	
	The above proposition applies since the coefficient matrix $(g^{ab})$ is smooth and positive definite on $\overline{\Omega}$, hence uniformly elliptic on $\overline{\Omega}$.
	
	\begin{lemma}\label{lem: unique sol first linearize}
		Assume the hypotheses of Theorem~\ref{thm: main}, except possibly the non-proportionality condition in Assumption~\ref{ass: homogeneous nondegeneracy}~\ref{item ass_non-prop}. Let $v_k^{(j)}\in C^\infty(\overline{\Omega})$ be the solution to the first linearized equation
		\begin{equation}\label{eq: first linearization j=1,2}
			\begin{cases}
				F_{ab}(D^2u_\phi^{(j)})\partial_{ab} v^{(j)}_k=0 &\text{ in }\Omega,\\
				v^{(j)}_k=\phi_k &\text{ on }\partial \Omega,
			\end{cases}
		\end{equation}
		for $k=1,2$, where $u_\phi^{(j)}\in C^\infty(\overline{\Omega})$ is the admissible solution to
		\begin{equation}\label{eq: main u_0 j=1,2}
			\begin{cases}
				F(D^2u_\phi^{(j)})=f_j &\text{ in }\Omega, \\
				u_\phi^{(j)}=\phi &\text{ on }\partial \Omega.
			\end{cases}
		\end{equation}
		for $j=1,2$. Then condition \eqref{DN map same} implies $v_k^{(1)}=v_k^{(2)}$ in $\Omega$, for $k=1,2$.
	\end{lemma}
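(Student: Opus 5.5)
We argue that the two first-linearized problems have the same solutions by showing they share a common solution space, using the conformal relation from Proposition~\ref{prop: J=Id}. Differentiating the identity $\Lambda_{f_1}(\phi+\varepsilon\phi_k)=\Lambda_{f_2}(\phi+\varepsilon\phi_k)$ at $\varepsilon=0$, which is legitimate by Proposition~\ref{prop: well-posed}\ref{item 3 well-posed}, gives $\partial_\nu v_k^{(1)}=\partial_\nu v_k^{(2)}$ on $\partial\Omega$ for all $\phi_k$. Since $v_k^{(1)}=v_k^{(2)}=\phi_k$ on $\partial\Omega$, the full gradients agree on $\partial\Omega$. By Lemma~\ref{lem: boundary determination}, $g_1^{ab}:=F_{ab}(D^2u_\phi^{(1)})$ and $g_2^{ab}:=F_{ab}(D^2u_\phi^{(2)})$ coincide on $\partial\Omega$. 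Hence the conormal DN maps \eqref{eq: DN first linearization} satisfy $\Lambda'_{g_1}=\Lambda'_{g_2}$.

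Proposition~\ref{prop: J=Id} then provides a smooth $c>0$ with $c|_{\partial\Omega}=1$ and $g_1^{ab}=c\,g_2^{ab}$ in $\Omega$. Consequently
\[
g_1^{ab}\partial_{ab}v_k^{(2)}=c\,g_2^{ab}\partial_{ab}v_k^{(2)}=0 \quad\text{in }\Omega .
\]
So $v_k^{(2)}$ solves the first linearized equation for $j=1$ with the same boundary data $\phi_k$. The operator $g_1^{ab}\partial_{ab}$ is uniformly elliptic on $\overline{\Omega}$ and has no zeroth-order term. The maximum principle therefore gives uniqueness for the Dirichlet problem \eqref{eq: first linearization j=1,2}, and $v_k^{(1)}=v_k^{(2)}$ in $\Omega$.

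The only delicate point is justifying the application of Proposition~\ref{prop: J=Id}: it requires that equality of the nonlinear DN maps on a neighborhood $B_{\phi,\delta}$ transfer to equality of the linearized conormal DN maps for every $h\in C^\infty(\partial\Omega)$. This follows by linearity, applying the derivative with scaled directions $\varepsilon h$ with $\varepsilon$ small, so that $\phi+\varepsilon h\in B_{\phi,\delta}$. The other ingredient is the boundary identification of the coefficients in Lemma~\ref{lem: boundary determination}, which converts equality of normal derivatives into equality of conormal derivatives. The conformal ambiguity itself is harmless here, because multiplying a non-divergence operator by a positive function does not change its kernel.
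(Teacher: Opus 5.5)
Your proposal is correct and follows the paper's proof essentially step for step. Boundary determination (Lemma~\ref{lem: boundary determination}) together with the first linearization of \eqref{DN map same} gives $\Lambda'_{g_1}=\Lambda'_{g_2}$, Proposition~\ref{prop: J=Id} gives $g_1^{ab}=c\,g_2^{ab}$ with $c>0$, and then both $v_k^{(j)}$ solve the same Dirichlet problem, so uniqueness gives $v_k^{(1)}=v_k^{(2)}$. The only differences are cosmetic: you substitute $v_k^{(2)}$ into the $g_1$-equation, where the paper substitutes $v_k^{(1)}$ into the $g_2$-equation, and you spell out the differentiation of the DN maps more explicitly.
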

	
	\begin{proof}
		First, thanks to the boundary determination in Lemma \ref{lem: boundary determination}, we have $g_1^{ab}=g_2^{ab}$ on $\partial\Omega$, where
		\begin{equation}\label{eq: definition of g_jab}
			g_j^{ab}:= F_{ab}(D^2u^{(j)}_\phi), \quad a,b=1,2.
		\end{equation}
		Second, since we know \eqref{DN map same}, we obtain the boundary information
		\[
		\nabla u^{(1)}_\phi\big|_{\partial\Omega}=\nabla u^{(2)}_\phi\big|_{\partial\Omega},
		\]
		where $u_\phi^{(j)}\in C^\infty(\overline{\Omega})$ is the admissible solution to \eqref{eq: main u_0 j=1,2}, for $j=1,2$. In view of \eqref{eq: definition of g_jab}, it is clear that $g_j^{ab}$ depends on $\phi$, for $a,b,j=1,2$.
		
		Since we know $g_1^{ab}=g_2^{ab}$ on $\partial \Omega$, for $a,b=1,2$, and the Dirichlet data $\phi_k$ can be chosen arbitrarily in $C^\infty(\partial\Omega)$, the first linearization of \eqref{DN map same} gives
		\[
		\Lambda_{g_1}'(h)=\Lambda_{g_2}'(h)	\quad \text{for all } h\in C^\infty(\partial\Omega).
		\]
		Applying Proposition \ref{prop: J=Id}, we can recover the leading coefficients for the first linearized equation \eqref{eq: first linearization j=1,2} up to a conformal factor. More precisely, there exists a conformal factor $c_\phi=c_\phi(x)>0$ such that $c_\phi|_{\partial\Omega}=1$ and
		\begin{equation}\label{eq: gauge invariant up to c_phi}
			F_{ab}(D^2 u_\phi^{(1)})=c_\phi F_{ab}(D^2 u_\phi^{(2)}) \text{ in }\Omega.
		\end{equation}
		
		On the one hand, for each $k=1,2$, we have
		\begin{equation}
			\begin{cases}
				g_1^{ab} \partial_{ab}v^{(1)}_k = c_\phi g_2^{ab}\partial_{ab} v_{k}^{(1)}=0 &\text{ in }\Omega,\\
				v_k^{(1)}=\phi_k &\text{ on }\partial\Omega,
			\end{cases}
		\end{equation}
		and hence we may rewrite the equation as
		\begin{equation}\label{eq: first v_1}
			\begin{cases}
				g_2^{ab} \partial_{ab}v_{k}^{(1)}=0 &\text{ in }\Omega,\\
				v_k^{(1)}=\phi_k &\text{ on }\partial\Omega,
			\end{cases}
		\end{equation}
		since $c_\phi>0$ in $\overline{\Omega}$. On the other hand, $v_k^{(2)}$ also solves the Dirichlet problem
		\begin{equation}\label{eq: first v_2}
			\begin{cases}
				g_2^{ab}\partial_{ab} v_{k}^{(2)}=0 &\text{ in }\Omega,\\
				v_k^{(2)}=\phi_k &\text{ on }\partial\Omega,
			\end{cases}
		\end{equation}
		for $k=1,2$. Therefore, $v_k^{(1)}$ and $v_k^{(2)}$ solve the same equation with the same Dirichlet data, which implies that $v_k^{(1)}=v_k^{(2)}$ in $\overline{\Omega}$, for $k=1,2$. This proves the assertion.
	\end{proof}
	
	For the forthcoming analysis, let us adopt the notation
	\begin{equation}\label{eq: definition of v_k}
		v_k := v_k^{(1)}=v_k^{(2)} \text{ in }\overline{\Omega}, \text{ for }k=1,2,
	\end{equation}
	which will always denote the solution to the first linearized equation \eqref{eq: first linearization j=1,2}.
	
	\subsection{Determination up to a gauge}
	
	Thanks to the first linearization, we can prove Proposition \ref{prop: determination up to gauge}.
	
	\begin{proof}[Proof of {\rm Proposition~\ref{prop: determination up to gauge}}]
		Let $u_0^{(j)}\in C^\infty(\overline{\Omega})$, $j=1,2$, be the admissible solutions to \eqref{eq: main background zero gauge}. By Proposition~\ref{prop: J=Id} and Lemma~\ref{lem: unique sol first linearize}, there exists a positive function $c_0=c_0(x)$ such that
		\begin{equation}\label{eq: gauge proof first derivative}
			F_{ab}(D^2u_0^{(1)})=c_0 F_{ab}(D^2u_0^{(2)})
			\quad \text{in }\; \Omega.
		\end{equation}
		Fix $x\in\Omega$ and set
		\[
		M_1:=D^2u_0^{(1)}(x),
		\quad
		M_2:=D^2u_0^{(2)}(x).
		\]
		Since $u_0^{(1)}$ and $u_0^{(2)}$ are admissible, both $M_1$ and $M_2$ belong to the admissible class. From \eqref{eq: gauge proof first derivative}, we have
		\[
		DF(M_1)=c_0(x)DF(M_2).
		\]
		Since $F$ is positively homogeneous of degree $m\neq 1$, its first derivative is homogeneous of degree $m-1$. Thus
		\[
		DF(tM_2)=t^{m-1}DF(M_2)
		\]
		for every $t>0$. Define $t(x):=c_0(x)^{1/(m-1)}$. Then
		\[
		DF(M_1)=DF(t(x)M_2).
		\]
		Since the admissible class is a cone, $t(x)M_2$ is admissible. By Assumption~\ref{ass: homogeneous nondegeneracy}~\ref{item ass_DF_inj}, we obtain
		\[
		M_1=t(x)M_2.
		\]
		Using the homogeneity of $F$ again, we get
		\[
		f_1(x)=F(M_1)=F(t(x)M_2)=t(x)^mF(M_2)=c_0(x)^{\frac{m}{m-1}}f_2(x).
		\]
		Since $x\in\Omega$ was arbitrary, this proves \eqref{eq: source gauge relation}.
	\end{proof}

	\section{The second linearization}\label{sec:second linearization}
	
	For the second linearization, let $\zeta \in C^\infty(\partial\Omega)$ be given, and consider the one-parameter family of boundary data
	\[
	\phi=\delta \zeta \text{ on }\p\Omega
	\]
	in \eqref{eq: Dirichlet data}, where $\delta\in (-\varepsilon,\varepsilon)$ and $\varepsilon>0$ is sufficiently small. For $j=1,2$, let $u_\delta^{(j)}$ denote the admissible solution to
	\begin{equation}
		\begin{cases}
			F(D^2u_{\delta}^{(j)})= f_j &\text{ in }\Omega, \\
			u_{\delta}^{(j)}=\delta \zeta &\text{ on }\partial\Omega.
		\end{cases}
	\end{equation}
	Then the identity \eqref{eq: gauge invariant up to c_phi} can be rewritten as
	\begin{equation}\label{eq: gauge invariant up to c_delta}
		F_{ab}(D^2 u_{\delta}^{(1)})=c_\delta\, F_{ab}(D^2 u_{\delta}^{(2)})
		\quad \text{in}\;\,\Omega.
	\end{equation}
	Furthermore, since the solution maps depend smoothly on $\delta$ and the conformal factor in \eqref{eq: gauge invariant up to c_delta} is uniquely determined, $c_\delta$ depends smoothly on $\delta$.
	We now differentiate \eqref{eq: gauge invariant up to c_delta} with respect to $\delta$ at $\delta=0$. This yields
	\begin{equation}\label{eq: gauge invariant derivative 1}
		\begin{split}
			F_{ab,k\ell}(D^2u_0^{(1)})\, \partial_{k\ell} v^{(1)}
			=
			c_0\, F_{ab,k\ell}(D^2u_0^{(2)})\, \partial_{k\ell} v^{(2)}
			+\dot c\, F_{ab}(D^2u_0^{(2)})
			\quad \text{in }\Omega,
		\end{split}
	\end{equation}
	where
	\[
	c_0:=c_\delta|_{\delta=0},
	\quad
	\dot c:=\partial_\delta c_\delta|_{\delta=0}.
	\]
	Here $u_0^{(j)}$ is the admissible solution of
	\begin{equation}\label{eq: main backgroud =0}
		\begin{cases}
			F(D^2u_0^{(j)})=f_j &\text{ in }\Omega, \\
			u_0^{(j)}=0 &\text{ on }\partial\Omega,
		\end{cases}
	\end{equation}
	and $v^{(j)}$ denotes the solution to the first linearized equation
	\begin{equation}\label{eq: first linearized equation delta}
		\begin{cases}
			F_{ab}(D^2u_0^{(j)}) \partial_{ab} v^{(j)}=0 &\text{ in }\Omega, \\
			v^{(j)}=\zeta &\text{ on }\partial\Omega,
		\end{cases}
	\end{equation}
	for $j=1,2$.
	
	By Lemma~\ref{lem: unique sol first linearize}, there exists a conformal factor $\tilde c>0$ in $\Omega$ such that
	\[
	F_{ab}(D^2u_0^{(1)})=\tilde c F_{ab}(D^2u_0^{(2)})
	\quad \text{in }\Omega.
	\]
	Since the two equations in \eqref{eq: first linearized equation delta} have the same boundary value $\zeta$, uniqueness for second order elliptic equations implies
	\[
	v:=v^{(1)}=v^{(2)}
	\quad \text{in }\,\Omega.
	\]
	Substituting this into \eqref{eq: gauge invariant derivative 1}, we obtain
	\begin{equation}\label{eq: gauge invariant derivative 2}
		\big(
		F_{ab,k\ell}(D^2u_0^{(1)})
		-
		c_0F_{ab,k\ell}(D^2u_0^{(2)})
		\big)\partial_{k\ell}v
		=
		\dot cF_{ab}(D^2u_0^{(2)})
		\quad \text{in }\,\Omega,
	\end{equation}
	for $a,b=1,2$.
	\begin{lemma}\label{lem: c independent of boundary}
		For every choice of $\zeta\in C^\infty(\partial\Omega)$, one has $\dot c=0$.
	\end{lemma}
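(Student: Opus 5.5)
The plan is to show that the conformal factor $c_\delta$ does not actually depend on $\delta$. The idea is to rerun the argument of Proposition~\ref{prop: determination up to gauge} at the background data $\delta\zeta$ instead of at $0$. That argument only uses the first-linearization identity, homogeneity and injectivity of $DF$, and the fact that the sources $f_j$ do not depend on the boundary data.

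\emph{Step 1.} Fix $\delta\in(-\varepsilon,\varepsilon)$ and $x\in\Omega$, and set $M_1^\delta:=D^2u_\delta^{(1)}(x)$ and $M_2^\delta:=D^2u_\delta^{(2)}(x)$. Both matrices are admissible. By \eqref{eq: gauge invariant up to c_delta}, which is Lemma~\ref{lem: unique sol first linearize} and Proposition~\ref{prop: J=Id} applied with $\phi=\delta\zeta$, we have $DF(M_1^\delta)=c_\delta(x)\,DF(M_2^\delta)$. Put $t_\delta(x):=c_\delta(x)^{1/(m-1)}$. Since $DF$ is homogeneous of degree $m-1$, this gives $DF(M_1^\delta)=DF(t_\delta M_2^\delta)$. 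The matrix $t_\delta M_2^\delta$ is admissible because the admissible class is a cone. Assumption~\ref{ass: homogeneous nondegeneracy}~\ref{item ass_DF_inj} then yields $M_1^\delta=t_\delta(x)M_2^\delta$.

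\emph{Step 2.} Apply $F$ and use its homogeneity of degree $m$. This gives
\[
f_1(x)=F(M_1^\delta)=t_\delta(x)^m F(M_2^\delta)=c_\delta(x)^{\frac{m}{m-1}}f_2(x).
\]
Since $f_2>0$, we get $c_\delta(x)=\big(f_1(x)/f_2(x)\big)^{\frac{m-1}{m}}$ for every $\delta$. The right-hand side does not depend on $\delta$, because the sources are fixed and only the boundary data vary with $\delta$.

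\emph{Step 3.} We already know that $c_\delta$ is smooth in $\delta$, and Step~2 shows it is constant in $\delta$. Hence $\dot c=\partial_\delta c_\delta|_{\delta=0}=0$ in $\Omega$. This holds for every $\zeta\in C^\infty(\partial\Omega)$, which proves Lemma~\ref{lem: c independent of boundary}.

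The only delicate point is Step~1. We must check that the hypotheses behind \eqref{eq: gauge invariant up to c_delta} are valid for nonzero $\delta$. Lemma~\ref{lem: boundary determination} and Lemma~\ref{lem: unique sol first linearize} are stated for an arbitrary $\phi$, so they apply with $\phi=\delta\zeta$; admissibility of $u_\delta^{(j)}$ follows from Proposition~\ref{prop: well-posed}. A simpler fallback for the boundary behaviour alone is to note that $c_\delta|_{\partial\Omega}=1$ for all $\delta$, so $\dot c=0$ on $\partial\Omega$. This is not needed for the argument above.
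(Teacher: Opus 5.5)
Your proposal is correct and follows the paper's proof essentially verbatim. At each $\delta$ you combine the first-linearization identity $DF(D^2u_\delta^{(1)})=c_\delta\, DF(D^2u_\delta^{(2)})$ with the homogeneity and injectivity of $DF$ to obtain $f_1=c_\delta^{m/(m-1)}f_2$, so $c_\delta$ does not depend on $\delta$ and $\dot c=0$; your explicit formula $c_\delta=(f_1/f_2)^{(m-1)/m}$ just makes this $\delta$-independence slightly more transparent.
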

	
	\begin{proof}
		Fix $\zeta\in C^\infty(\partial\Omega)$. For each sufficiently small $\delta$, the conformal factor $c_\delta$ satisfies
		\[
		DF(D^2u_\delta^{(1)})=c_\delta DF(D^2u_\delta^{(2)})
		\quad \text{in }\, \Omega.
		\]
		Since $F$ is positively homogeneous of degree $m\neq 1$, its first derivative is homogeneous of degree $m-1$. Setting $t_\delta:=c_\delta^{1/(m-1)}$, we have
		\[
		DF(D^2u_\delta^{(1)})=DF(t_\delta D^2u_\delta^{(2)}).
		\]
		By the injectivity of $DF$ on the admissible class,
		\[
		D^2u_\delta^{(1)}=t_\delta D^2u_\delta^{(2)}
		\quad \text{in }\, \Omega.
		\]
		Using the homogeneity of $F$, we obtain
		\[
		f_1=F(D^2u_\delta^{(1)})=t_\delta^mF(D^2u_\delta^{(2)})
		=
		c_\delta^{\frac{m}{m-1}}f_2
		\quad \text{in }\, \Omega.
		\]
		Since $f_1$ and $f_2$ are independent of $\delta$, the last identity shows that $c_\delta$ is independent of $\delta$, and hence, $\dot c=0$ in $\Omega$.
	\end{proof}
	
	\subsection{Spanning Hessians and Runge approximation}	
	
	Consequently, \eqref{eq: gauge invariant derivative 2} simplifies to
	\begin{equation}\label{eq: gauge invariant derivative 4}
		\big(
		F_{ab,k\ell}(D^2u_0^{(1)})
		-
		c_0\,F_{ab,k\ell}(D^2u_0^{(2)})
		\big)\partial_{k\ell}v
		=0
		\quad \text{in }\,\Omega,
	\end{equation}
	for every solution $v$ to \eqref{eq: first linearized equation delta} and every $a,b=1,2$.
	
	To proceed further from \eqref{eq: gauge invariant derivative 4}, one needs to understand how much information about the tensor
	\[
	T^{ab}_{k\ell}:=
	F_{ab,k\ell}(D^2u_0^{(1)})
	-
	c_0F_{ab,k\ell}(D^2u_0^{(2)})
	\]
	can be extracted by testing against Hessians of solutions to \eqref{eq: first linearized equation delta}. At each point $x\in \Omega$, let
	\[
	A(x):=\big(A_{k\ell}(x)\big)_{1\le k,\ell\le 2}
	:=\big(F_{k\ell}(D^2u_0^{(2)}(x))\big)_{1\le k,\ell\le 2}.
	\]
	Since every solution $v$ to \eqref{eq: first linearized equation delta} satisfies
	\[
	A_{k\ell}(x)\partial_{k\ell}v(x)=0,
	\]
	the Hessian $D^2v(x)$ necessarily belongs to the subspace
	\[
	K_{A(x)}:=
	\big\{H=(H_{k\ell})\in \cS:\,A_{k\ell}(x)H_{k\ell}=0\big\}.
	\]
	The following elementary linear algebra lemma shows that, once sufficiently many such Hessians are available, the tensor $T^{ab}_{k\ell}(x)$ must be proportional to $A_{k\ell}(x)$ in the $(k,\ell)$-indices.
	
	\begin{lemma}\label{lem: proportionality from orthogonality}
		Let $A=(A_{k\ell}(x))$ be a non-degenerate matrix-valued function in $\Omega$ with $A(x)\in \cS$ for every $x\in \Omega$. For each $x\in \Omega$, define $K_{A(x)}$ as above.	 Suppose that for each fixed $1\le a,b\le 2$, the symmetric matrix-valued function
		\[
		T^{ab}(x):=\big(T^{ab}_{k\ell}(x)\big)_{1\le k,\ell\le 2}
		\]
		satisfies
		\[
		T^{ab}_{k\ell}(x)H_{k\ell}=0
		\quad \text{for all } H\in K_{A(x)},
		\]
		for every $x\in \Omega$. Then there exists a scalar function $\lambda^{ab}=\lambda^{ab}(x)$ such that
		\[
		T^{ab}_{k\ell}(x)=\lambda^{ab}(x)A_{k\ell}(x)
		\quad \text{for all } x\in \Omega,\ \ 1\le k,\ell\le 2.
		\]
	\end{lemma}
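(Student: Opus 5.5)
This is pointwise linear algebra on $\cS$. Fix $x\in\Omega$ and indices $a,b$. Equip $\cS$ with the Frobenius inner product $\langle T,H\rangle:=T_{k\ell}H_{k\ell}$, which is nondegenerate on symmetric matrices. With this pairing, $K_{A(x)}=\{H\in\cS:\langle A(x),H\rangle=0\}$ is the orthogonal complement of the span of $A(x)$.

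Since $A(x)$ is nondegenerate, it is nonzero, so $K_{A(x)}$ is a hyperplane of dimension $\dim\cS-1=2$. The hypothesis says $T^{ab}(x)\in K_{A(x)}^{\perp}$. Because $\cS$ is finite-dimensional and the pairing is nondegenerate, $(\mathrm{span}\{A(x)\}^\perp)^\perp=\mathrm{span}\{A(x)\}$. Hence $T^{ab}(x)=\lambda^{ab}(x)A(x)$ for a unique scalar.

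Concretely, we can take
\[
\lambda^{ab}(x)=\frac{\langle T^{ab}(x),A(x)\rangle}{\langle A(x),A(x)\rangle}.
\]
To verify this, decompose $T^{ab}(x)=\lambda^{ab}(x)A(x)+R$ with $\langle R,A(x)\rangle=0$. Then $R\in K_{A(x)}$, so testing the hypothesis with $H=R$ gives $\langle T^{ab}(x),R\rangle=\langle R,R\rangle=0$, which forces $R=0$. The formula also shows that $\lambda^{ab}$ inherits the regularity of $T^{ab}$ and $A$, since $|A|^2>0$.

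There is no real obstacle here. The only care needed is to use the symmetric space $\cS$, since both $T^{ab}(x)$ and the test matrices $H$ are symmetric, and to note that nondegeneracy of $A$ is used only to ensure $A(x)\neq0$.
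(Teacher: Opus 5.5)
Your proof is correct and matches the paper's argument. Both identify $K_{A(x)}$ as the Frobenius-orthogonal complement of $A(x)\neq 0$ in the three-dimensional space $\cS$, and conclude $T^{ab}(x)\in K_{A(x)}^\perp=\mathrm{span}\{A(x)\}$. Your explicit projection formula for $\lambda^{ab}(x)$ is a harmless, slightly more informative addition: it shows how regular $\lambda^{ab}$ is, which the paper does not state.
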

	
	\begin{proof}
		Fix $x\in \Omega$. Since $\cS$ is a three-dimensional real inner product space with respect to the Frobenius product, i.e.,
		\[
		M:N:=M_{k\ell}N_{k\ell},
		\]
		and $A(x)\neq 0$, the set $K_{A(x)}$ is a codimension-one linear subspace of $\cS$. Hence, its orthogonal complement $K_{A(x)}^\perp$ is one-dimensional. By definition of $K_{A(x)}$, one has
		\[
		A(x):H=0
		\quad \text{for all } H\in K_{A(x)},
		\]
		so that $A(x)\in K_{A(x)}^\perp$. Therefore
		\[
		K_{A(x)}^\perp=\mathrm{span}\{A(x)\}.
		\]
		Now fix $1\le a,b\le 2$. The assumption implies that
		\[
		T^{ab}(x):H=0
		\quad \text{for all } H\in K_{A(x)},
		\]
		namely $T^{ab}(x)\in K_{A(x)}^\perp$. Since $K_{A(x)}^\perp=\mathrm{span}\{A(x)\}$, there exists a scalar $\lambda^{ab}(x)$ such that
		\[
		T^{ab}(x)=\lambda^{ab}(x)A(x).
		\]
		In components, this reads
		\[
		T^{ab}_{k\ell}(x)=\lambda^{ab}(x)A_{k\ell}(x),
		\]
		as claimed.
	\end{proof}
	
	The next few results are stated for a general uniformly elliptic second-order operator with sufficiently regular coefficients. They will later be applied to the first linearized operator $F_{k\ell}(D^2u_0^{(2)})\partial_{k\ell}$,
	but their validity does not rely on this particular form.
	
	\begin{proposition}\label{prop: spanning Hessians at a point}
		Let
		\[
		Lv:=A_{k\ell}(x)\partial_{k\ell}v
		\quad \text{in } \Omega,
		\]
		where $A=(A_{k\ell})_{1\le k,\ell\le 2}\in C^\alpha(\Omega;\cS)$ is uniformly elliptic for some $\alpha\in (0,1)$. Fix $x_0\in \Omega$, and define
		\[
		K_{A(x_0)}:=\big\{H\in \cS:\,A_{k\ell}(x_0)H_{k\ell}=0\big\}.
		\]
		Then there exist a radius $r>0$ with $\overline{B_r(x_0)}\subset \Omega$ and two functions $v_1,v_2\in C^{2,\alpha}(B_r(x_0))\cap C(\overline{B_r(x_0)})$ solving
		\[
		Lv_j=0
		\quad \text{in }\, B_r(x_0), \quad j=1,2,
		\]
		such that $D^2v_1(x_0)$ and $D^2v_2(x_0)$ are linearly independent elements of $K_{A(x_0)}$. In particular,
		\[
		\mathrm{span}\big\{D^2v(x_0):\,Lv=0 \text{ near }x_0\big\}=K_{A(x_0)}.
		\]
	\end{proposition}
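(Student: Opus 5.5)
We construct the two solutions by perturbing explicit quadratic polynomials. Fix $x_0\in\Omega$ and write $A_0:=A(x_0)$. The subspace $K_{A_0}$ is two-dimensional, since $A_0\neq 0$ and $\dim\cS=3$. Choose a basis $H_1,H_2$ of $K_{A_0}$ and set
\[
p_j(x):=\tfrac12 (x-x_0)^{\top}H_j(x-x_0), \quad j=1,2.
\]
The constant-coefficient operator $A_{0,k\ell}\partial_{k\ell}$ annihilates $p_j$, but $L$ does not. The defect is
\[
Lp_j=\big(A_{k\ell}(x)-A_{k\ell}(x_0)\big)(H_j)_{k\ell}=:g_j(x),
\]
which satisfies $|g_j(x)|\le C|x-x_0|^\alpha$ and has $C^\alpha$ seminorm bounded uniformly on small balls.

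We look for $v_j=p_j+w_j$, where $w_j$ solves
\[
Lw_j=-g_j \quad\text{in } B_r(x_0), \qquad w_j=0 \quad\text{on }\partial B_r(x_0).
\]
Uniform ellipticity and $A\in C^\alpha$ give a unique solution $w_j\in C^{2,\alpha}(\overline{B_r})$ (Gilbarg--Trudinger, Chapter 6). The key step is to show that $|D^2w_j(x_0)|$ can be made as small as we like by taking $r$ small. For this we rescale: put $y=(x-x_0)/r$ and $\tilde w(y):=r^{-2}w_j(x_0+ry)$ on $B_1$. Then $\tilde w$ solves
\[
\tilde A_{k\ell}(y)\partial_{k\ell}\tilde w=-\tilde g(y), \qquad \tilde A(y):=A(x_0+ry), \quad \tilde g(y):=g_j(x_0+ry),
\]
with zero boundary data. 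The rescaled coefficients $\tilde A$ have ellipticity constants independent of $r$ and $C^\alpha(B_1)$ norms bounded independently of $r$ for $r\le1$. The right-hand side satisfies $\|\tilde g\|_{C^\alpha(B_1)}\le Cr^\alpha$, because $\sup|\tilde g|\le Cr^\alpha$ and $[\tilde g]_\alpha=r^\alpha[g_j]_{\alpha}\le Cr^\alpha$. The global Schauder estimate on the fixed ball $B_1$ therefore gives $\|\tilde w\|_{C^{2,\alpha}(B_1)}\le Cr^\alpha$ with $C$ independent of $r$. Since $D^2w_j(x_0)=D^2\tilde w(0)$, we conclude
\[
|D^2w_j(x_0)|\le Cr^\alpha.
\]

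It follows that $D^2v_j(x_0)=H_j+O(r^\alpha)$. Each $D^2v_j(x_0)$ lies in $K_{A_0}$ automatically, since
\[
A_{k\ell}(x_0)\partial_{k\ell}v_j(x_0)=Lv_j(x_0)=0.
\]
Linear independence is an open condition, so for $r$ small enough $D^2v_1(x_0)$ and $D^2v_2(x_0)$ are independent and span $K_{A_0}$. This proves the "in particular" statement: the span is contained in $K_{A_0}$ because every solution of $Lv=0$ satisfies $A_0:D^2v(x_0)=0$, and it is all of $K_{A_0}$ by the construction. Finally, $v_j\in C^{2,\alpha}(\overline{B_r})\subset C^{2,\alpha}(B_r)\cap C(\overline{B_r})$.

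I expect the main obstacle to be keeping the Schauder constant uniform in $r$, which is why we rescale to the unit ball rather than estimate on shrinking balls directly. If only $C^\alpha$ coefficients on the open set $\Omega$ are available, we first fix $r_0$ with $\overline{B_{r_0}(x_0)}\subset\Omega$, where the $C^\alpha$ norm of $A$ is finite, and work with $r\le r_0$ throughout.
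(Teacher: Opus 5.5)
Your proof is correct and follows essentially the same route as the paper. Both arguments solve $Lv=0$ in $B_r(x_0)$ with $v$ equal to a quadratic polynomial on $\partial B_r(x_0)$, choose the quadratics so their Hessians span $K_{A(x_0)}$, rescale to the unit ball, and show $D^2v_j(x_0)\to H_j$ as $r\to 0$. The differences are minor: the paper first normalizes $A(x_0)=I_2$ by a linear change of variables and then combines the maximum principle with interior Schauder estimates to get qualitative convergence, whereas you take an arbitrary basis of $K_{A(x_0)}$ directly and apply the global Schauder estimate with zero boundary data, which gives the quantitative bound $|D^2v_j(x_0)-H_j|\le Cr^\alpha$.
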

	
	\begin{proof}
		By translation, we may assume that $x_0=0$. Since $A(0)$ is symmetric positive definite by uniform ellipticity, there exists an invertible matrix $P$ such that
		\begin{equation*}
			P^{-1}A(0)P^{-T}=I_2.
		\end{equation*}
		Equivalently, $A(0)=PP^T$. Introduce the linear change of variables $x=Pz$, and, for a function $v$ in the $x$-variables, define $\widetilde v(z):=v(Pz)$. Then
		\begin{equation*}
			D_z^2\widetilde v(z)=P^T D_x^2v(x)P,
			\quad x=Pz.
		\end{equation*}
		Equivalently,
		\begin{equation*}
			D_x^2v(x)=P^{-T}D_z^2\widetilde v(z)P^{-1}.
		\end{equation*}
		Since the change of variables is linear, no first order terms are produced. Hence the equation $A(x):D_x^2v=0$ is transformed into
		\begin{equation*}
			\widetilde A(z):D_z^2\widetilde v=0,
		\end{equation*}
		where
		\begin{equation*}
			\widetilde A(z):=\big(\widetilde A_{k\ell}(z)\big)_{1\le k,\ell\le 2}
			:=P^{-1}A(Pz)P^{-T}.
		\end{equation*}
		In particular,
		\begin{equation*}
			\widetilde A(0)=P^{-1}A(0)P^{-T}=I_2.
		\end{equation*}
		
		We next compare the corresponding constraint spaces. The map $H\mapsto P^THP$ is a linear isomorphism from $K_{A(0)}$ onto $K_{I_2}$. Indeed, if $H\in K_{A(0)}$, then
		\begin{equation*}
			I_2:(P^THP)=\tr(P^THP)=\tr(PP^TH)=A(0):H=0.
		\end{equation*}
		Conversely, if $\widetilde H\in K_{I_2}$, then $H=P^{-T}\widetilde H P^{-1}$ satisfies
		\begin{equation*}
			A(0):H
			=
			A(0):P^{-T}\widetilde H P^{-1}
			=
			I_2:\widetilde H
			=
			0.
		\end{equation*}
		Therefore, it suffices to prove the proposition in the transformed coordinates. Renaming the variable $z$ by $x$, we may assume without loss of generality that $A(0)=I_2$.
		
		In this case,
		\begin{equation*}
			K_{A(0)}=\{H\in \cS:\,\tr(H)=0\},
		\end{equation*}
		which is a two-dimensional subspace of $\cS$. Consider the trace-free matrices
		\begin{equation*}
			H_1=
			\begin{pmatrix}
				1&0\\
				0&-1
			\end{pmatrix},
			\quad
			H_2=
			\begin{pmatrix}
				0&1\\
				1&0
			\end{pmatrix},
		\end{equation*}
		which form a basis of $K_{A(0)}$. Let
		\begin{equation*}
			q_1(x):=\frac12(x_1^2-x_2^2),
			\quad
			q_2(x):=x_1x_2.
		\end{equation*}
		Then $D^2q_1=H_1$, $D^2q_2=H_2$, and since $\Delta q_j=0$, we have
		\begin{equation*}
			A(0):D^2 q_j= I_2:D^2q_j=0,
			\quad j=1,2.
		\end{equation*}
		
		Choose $r_0>0$ so that $\overline{B_{r_0}(0)}\subset \Omega$. For each $r\in (0,r_0)$ and each $j=1,2$, let $v_{j,r}$ be the unique solution to
		\begin{equation}\label{eq: local Dirichlet problem for vjr}
			\begin{cases}
				A_{ab}(x)\partial_{ab}v_{j,r}=0 & \text{in } B_r(0),\\
				v_{j,r}=q_j & \text{on } \partial B_r(0).
			\end{cases}
		\end{equation}
		By the standard solvability theory for uniformly elliptic non-divergence form equations with $C^\alpha$ coefficients,
		\begin{equation*}
			v_{j,r}\in C^{2,\alpha}(B_r(0))\cap C(\overline{B_r(0)}).
		\end{equation*}
		Define the rescaled functions
		\begin{equation*}
			w_{j,r}(y):=r^{-2}v_{j,r}(ry), \quad y\in B_1(0).
		\end{equation*}
		A direct computation shows that $w_{j,r}$ solves
		\begin{equation}\label{eq: rescaled equation}
			\begin{cases}
				A_{ab,r}(y)\partial_{ab}w_{j,r}=0 & \text{in } B_1(0),\\
				w_{j,r}(y)=r^{-2}q_j(ry)=q_j(y) & \text{on } \partial B_1(0),
			\end{cases}
		\end{equation}
		where
		\begin{equation*}
			A_{ab,r}(y):=A_{ab}(ry)
			\quad \text{and} \quad
			A_r(y)=(A_{ab,r}(y))_{1\le a,b\le 2}.
		\end{equation*}
		Since $A\in C^\alpha(\Omega)$ and $A(0)=I_2$, it follows that
		\begin{equation*}
			A_r\to I_2
			\quad \text{in }\, C^\alpha(\overline{B_1(0)})
			\quad \text{as } r\to 0.
		\end{equation*}
		
		Let $\eta_{j,r}:=w_{j,r}-q_j$. Then $\eta_{j,r}=0$ on $\partial B_1(0)$ and
		\begin{equation*}
			A_{ab,r}(y)\partial_{ab}\eta_{j,r}
			=
			-\big(A_{ab,r}(y)-\delta_{ab}\big)\partial_{ab}q_j
			\quad \text{in }\, B_1(0).
		\end{equation*}
		The right hand side tends to zero in $C^\alpha(\overline{B_1(0)})$ as $r\to 0$. By the maximum principle and the interior Schauder estimates, for every $\rho\in (0,1)$,
		\begin{equation*}
			\eta_{j,r}\to 0
			\quad \text{in } C^2(B_\rho(0))
			\quad \text{as } r\to 0.
		\end{equation*}
		In particular, taking $\rho=1/2$, we obtain
		\begin{equation*}
			w_{j,r}\to q_j
			\quad \text{in } C^2(B_{1/2}(0))
			\quad \text{as } r\to 0.
		\end{equation*}
		Therefore, as $r\to 0$,
		\begin{equation*}
			D^2w_{j,r}(0)\to D^2q_j=H_j,
			\quad j=1,2.
		\end{equation*}
		On the other hand, by the definition of $w_{j,r}$,
		\begin{equation*}
			D^2w_{j,r}(0)=D^2v_{j,r}(0).
		\end{equation*}
		Hence
		\begin{equation*}
			D^2v_{j,r}(0)\to H_j,
			\quad j=1,2.
		\end{equation*}
		
		Since $H_1$ and $H_2$ are linearly independent, it follows that for all sufficiently small $r>0$, the matrices $D^2v_{1,r}(0)$ and $D^2v_{2,r}(0)$ are linearly independent. Moreover, since each $v_{j,r}$ solves $Lv_{j,r}=0$ in $B_r(0)$, evaluating the equation at $0$ gives
		\begin{equation*}
			A_{k\ell}(0)\partial_{k\ell}v_{j,r}(0)=0,
		\end{equation*}
		so that
		\begin{equation*}
			D^2v_{j,r}(0)\in K_{A(0)},
			\quad j=1,2.
		\end{equation*}
		Thus, for some sufficiently small $r>0$, the two solutions $v_1:=v_{1,r}$ and $v_2:=v_{2,r}$ satisfy that $D^2v_1(0)$ and $D^2v_2(0)$ are linearly independent elements of $K_{A(0)}$. Since $K_{A(0)}$ is two-dimensional, this implies
		\begin{equation*}
			\mathrm{span}\{D^2v(0):\,Lv=0 \text{ near }0\}=K_{A(0)}.
		\end{equation*}
		
		Finally, returning to the original variables $x=Pz$, if $\widetilde v$ is a solution in the transformed variables, then $v(x):=\widetilde v(P^{-1}x)$ solves the original equation near $0$. Moreover,
		\begin{equation*}
			D_z^2\widetilde v(0)=P^T D_x^2v(0)P.
		\end{equation*}
		The map $H\mapsto P^THP$ is a linear isomorphism from $K_{A(0)}$ onto $K_{I_2}$, and hence linear independence of the Hessians in the normalized variables is equivalent to linear independence of the corresponding Hessians in the original variables. After restricting to a sufficiently small Euclidean ball contained in the image of the transformed neighborhood, we obtain the conclusion at $0$.
	\end{proof}
	
	The preceding discussion shows that, to extract more information from \eqref{eq: gauge invariant derivative 4}, it is enough to produce sufficiently many global solutions to the first linearized equation whose Hessians at a fixed point span the subspace
	\[
	K_{A(x_0)}
	:=\{H\in \cS:\,A_{k\ell}(x_0)H_{k\ell}=0\},
	\]
	where
	\[
	A(x):=\big(A_{k\ell}(x)\big)_{1\le k,\ell\le 2}
	:=\big(F_{k\ell}(D^2u_0^{(2)}(x))\big)_{1\le k,\ell\le 2}.
	\]
	
	By Proposition~\ref{prop: spanning Hessians at a point}, such a spanning property holds locally near any fixed point $x_0\in \Omega$. We next invoke a Runge approximation result to replace these local solutions by global ones.
	
	\begin{proposition}[Runge approximation]\label{prop: weak Runge approximation}
		Let $\Omega_0\Subset \Omega\subset \R^2$ be a smooth subdomain, and let
		\[
		L:=A_{k\ell}(x)\partial_{k\ell}
		\quad \text{in } \Omega,
		\]
		where $A=(A_{k\ell})_{1\le k,\ell\le 2}\in C^\infty(\overline{\Omega};\cS)$ is uniformly elliptic. Let $w\in H^{2}(\Omega_0)$ satisfy $Lw=0$ in $\Omega_0$. Then for every compact set $K\subset \Omega_0$ and every $\varepsilon>0$, there exists a global solution $v\in H^{2}(\Omega)$ of $Lv=0$ in $\Omega$, such that
		\[
		\|v-w\|_{L^2(K)}<\varepsilon.
		\]
	\end{proposition}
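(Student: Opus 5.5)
This is the classical Runge approximation argument via duality and unique continuation, adapted to non-divergence form. Since $A\in C^\infty(\overline{\Omega})$, I will rewrite $L$ in divergence form plus a first-order term:
\[
Lv=\partial_k(A_{k\ell}\partial_\ell v)-(\partial_k A_{k\ell})\partial_\ell v .
\]
Its formal adjoint is then
\[
L^*\psi=\partial_{k\ell}(A_{k\ell}\psi)=\partial_k(A_{k\ell}\partial_\ell\psi)+b_\ell\partial_\ell\psi+c\,\psi ,
\]
with smooth coefficients $b_\ell,c$. Set
\[
X:=\{v|_{\Omega_0}:\ v\in H^2(\Omega),\ Lv=0\text{ in }\Omega\}, \qquad Y:=\{w\in H^2(\Omega_0):\ Lw=0\text{ in }\Omega_0\}.
\]
By Hahn--Banach it suffices to show the following: if $f\in L^2(K)$ (extended by zero) satisfies $\int_K f v\,dx=0$ for all $v\in X$, then $\int_K f w\,dx=0$ for all $w\in Y$.

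For such an $f$, let $\psi\in H^2(\Omega)\cap H^1_0(\Omega)$ solve $L^*\psi=f$ in $\Omega$, $\psi=0$ on $\partial\Omega$. This problem is solvable because $L$ is injective on $H^2\cap H^1_0$ by the maximum principle, so by the Fredholm alternative the adjoint problem is solvable as well. Since $v=\phi$ is arbitrary on $\partial\Omega$, Green's formula for $L$ and $L^*$ gives
\[
0=\int_\Omega fv=\int_\Omega \psi\,Lv+\int_{\partial\Omega}(\text{boundary terms}) .
\]
The boundary terms reduce to a multiple of $(A\nu\cdot\nu)\,\partial_\nu\psi\,\phi$, because $\psi|_{\partial\Omega}=0$ and the tangential derivatives of $\psi$ vanish. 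As $\phi$ is arbitrary and $A\nu\cdot\nu>0$, this forces $\partial_\nu\psi=0$ on $\partial\Omega$.

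So $\psi$ has vanishing Cauchy data and satisfies $L^*\psi=0$ in $\Omega\setminus K$. The main step is unique continuation for $L^*$, a second-order elliptic operator with smooth, hence Lipschitz, principal part. I extend $\psi$ by zero across $\partial\Omega$ to a slightly larger domain, and use connectedness of $\Omega\setminus K$. If $\Omega\setminus K$ is not connected, I replace $K$ by $\Omega_0$-filled hull, or take $K$ with connected complement contained in $\Omega_0$, which suffices since enlarging $K$ only strengthens the claim. Weak unique continuation then gives $\psi\equiv0$ in $\Omega\setminus K$, in particular near $\partial\Omega_0$. This is the step I expect to be the main obstacle. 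It needs a precise statement: $L^*$ has smooth coefficients in divergence form plus lower order terms, and in two dimensions unique continuation holds even for $L^\infty$ principal coefficients.

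Finally, for $w\in Y$ I approximate $w$ by smooth solutions, or work directly in $H^2(\Omega_0)$. Since $\psi$ vanishes near $\partial\Omega_0$ and $\psi\in H^2$, integration by parts in $\Omega_0$ has no boundary terms:
\[
\int_K f w=\int_{\Omega_0}(L^*\psi)w=\int_{\Omega_0}\psi\,Lw=0 .
\]
Hence $Y|_K\subset\overline{X|_K}^{L^2(K)}$, which gives the required global $v$. If needed, the $L^2(K)$ approximation can be upgraded to $C^2$ on a smaller compact set through interior Schauder estimates applied to $v-w$. That upgrade is exactly what the subsequent application to Hessians at $x_0$ will use.
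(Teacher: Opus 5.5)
Your proposal is correct and follows essentially the same route as the paper. Both arguments use Hahn--Banach duality, solve the adjoint Dirichlet problem $L^*\psi=f$, use Green's identity to force the (co)normal derivative of $\psi$ to vanish on $\partial\Omega$, apply zero extension plus unique continuation, and integrate by parts against $w$ in $\Omega_0$. Testing only with $f\in L^2(K)$ instead of $L^2(\Omega_0)$ is a harmless variant: it makes $\psi$ vanish near $\partial\Omega_0$ and avoids any trace argument there.

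One caveat is shared with the paper, which silently applies unique continuation on $\Omega\setminus\overline{\Omega_0}$. You cannot always enlarge $K$ inside $\Omega_0$ so that every component of its complement reaches $\partial\Omega$. For example, if $\Omega_0$ is an annulus in the unit disk, a flux argument shows that $w=\log|x|$ is a counterexample to the statement itself. So a topological hypothesis, such as connectedness of $\Omega\setminus\overline{\Omega_0}$, is needed. It does hold in the paper's application, where $\Omega_0$ can be taken to be a disk.
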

	
	\begin{proof}
		The result follows from the standard Hahn-Banach duality argument combined with unique continuation for the adjoint operator. Since $L=A_{k\ell}(x)\partial_{k\ell}$, its formal adjoint is given by
		\[
		L^\ast \varphi=\partial_{k\ell}(A_{k\ell}\varphi)
		=\partial_k(A_{k\ell}\partial_\ell\varphi)+\partial_k\big((\partial_\ell A_{k\ell})\varphi\big).
		\]
		
		Let
		\[
		\mathcal S(\Omega):=\{v\in H^{2}(\Omega): Lv=0 \text{ in } \Omega\},
		\quad
		\mathcal S(\Omega_0):=\{u\in H^{2}(\Omega_0): Lu=0 \text{ in } \Omega_0\}.
		\]
		We claim that
		\[
		\overline{\{v|_{\Omega_0}: v\in \mathcal S(\Omega)\}}^{\,L^2(\Omega_0)}
		=
		\mathcal S(\Omega_0).
		\]
		By the Hahn-Banach theorem, it suffices to show the following: if $f\in L^2(\Omega_0)$ satisfies
		\begin{equation}
			\int_{\Omega_0} f\,v\,dx=0
			\quad \text{for all } v\in \mathcal S(\Omega)
			\implies
			\int_{\Omega_0} f\,u\,dx=0
			\quad \text{for all } u\in \mathcal S(\Omega_0).
		\end{equation}
		Extend $f$ by zero outside $\Omega_0$, and denote the extension by $\widetilde f\in L^2(\Omega)$. Since the Dirichlet problem for $L$ is uniquely solvable, the corresponding Dirichlet realization of $L$ is invertible. Hence, by the Fredholm alternative for elliptic boundary value problems, the adjoint Dirichlet problem is also uniquely solvable. Therefore, there exists
		\[
		\varphi\in H^1_0(\Omega)\cap H^2(\Omega)
		\]
		such that
		\[
		L^\ast \varphi=\widetilde f
		\quad \text{in } \Omega.
		\]
		
		Now let $v\in \mathcal S(\Omega)$ be arbitrary, and let $g:=v|_{\partial\Omega}$. Using the Green identity for $L$ and $L^\ast$, together with the fact that $\varphi=0$ on $\partial\Omega$, we obtain
		\begin{equation}\label{eq: Green identity Runge}
			\int_{\Omega_0} f\,v\,dx
			=
			\int_\Omega \widetilde f\,v\,dx
			=
			\int_\Omega (L^\ast\varphi)\,v\,dx-\int_\Omega \varphi\,Lv\,dx
			=
			\int_{\partial\Omega} \partial_{\nu_A}\varphi \, g\,dS,
		\end{equation}
		where $\partial_{\nu_A}\varphi:=\nu_kA_{k\ell}\partial_\ell\varphi$ is the conormal derivative associated with $L^\ast$. Here we used that $\varphi=0$ on $\partial\Omega$. By assumption,
		\[
		\int_{\Omega_0} f\,v\,dx=0
		\quad \text{for all } v\in \mathcal S(\Omega).
		\]
		Since smooth boundary data $g$ are arbitrary and give rise to solutions $v\in \mathcal S(\Omega)$, \eqref{eq: Green identity Runge} implies $\partial_{\nu_A}\varphi=0$ on $\partial\Omega$. Therefore,
		\[
		\varphi=\partial_{\nu_A}\varphi=0 \quad \text{on } \partial\Omega.
		\]
		
		Next, extend $\varphi$ by zero to a slightly larger smooth domain $\widetilde\Omega\supset \Omega$. Since both $\varphi$ and its conormal derivative vanish on $\partial\Omega$, the zero extension still satisfies
		\[
		L^\ast\varphi=0
		\quad \text{in } \widetilde\Omega\setminus \overline{\Omega_0}
		\]
		in the weak sense. Hence, by the unique continuation principle for second-order elliptic equations, we conclude that
		\[
		\varphi=0
		\quad \text{in } \Omega\setminus \overline{\Omega_0}.
		\]
		In particular, the traces of $\varphi$ and $\nabla\varphi$ vanish on $\partial\Omega_0$.
		
		Now let $u\in \mathcal S(\Omega_0)$ be arbitrary. Using the Green identity again, now in $\Omega_0$, we obtain
		\[
		\int_{\Omega_0} f\,u\,dx
		=
		\int_{\Omega_0} (L^\ast\varphi)\,u\,dx-\int_{\Omega_0} \varphi\,Lu\,dx
		=
		\int_{\partial\Omega_0} \big( \partial_{\nu_A}\varphi\,u-\varphi\,\partial_{\nu_A}u \big)\,dS.
		\]
		Since $\varphi$ and $\nabla\varphi$ vanish on $\partial\Omega_0$, the boundary integral is zero. Hence
		\[
		\int_{\Omega_0} f\,u\,dx=0
		\quad \text{for all } u\in \mathcal S(\Omega_0).
		\]
		This proves the $L^2$-density claim. Therefore, for every compact set $K\subset \Omega_0$ and every $\varepsilon>0$, there exists $v\in \mathcal S(\Omega)$ such that
		\[
		\|v-w\|_{L^2(K)}<\varepsilon,
		\]
		as desired.
	\end{proof}
	
	\begin{remark}
		{\rm Proposition~\ref{prop: weak Runge approximation}} is the weak form of Runge approximation needed below. The passage from $L^2$-approximation to $C^2$-approximation on compact subsets follows from interior elliptic regularity for homogeneous solutions.
	\end{remark}
	
	\begin{lemma}\label{lem: upgrade weak Runge to C2}
		Let $\Omega_1\Subset \Omega$ be a smooth subdomain. Suppose that $\{u_m\}$ is a sequence of solutions to $Lu_m=0$ in $\Omega_1$, and that $u_m\to 0 \quad \text{in } L^2(\Omega_1)$.
		Then for every compact set $K\subset \Omega_1$, $
		u_m\to 0$ in $C^2(K)$.
	\end{lemma}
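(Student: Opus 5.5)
Recall that $L=A_{k\ell}(x)\partial_{k\ell}$ has coefficients in $C^\infty(\overline{\Omega};\cS)$ and is uniformly elliptic. The plan is to upgrade $L^2$ convergence to $C^2$ convergence by combining an interior $L^\infty$ bound with interior $W^{2,p}$ or Schauder estimates for the homogeneous equation $Lu_m=0$. All estimates are applied on a fixed intermediate domain between $K$ and $\Omega_1$. Fix a compact $K\subset\Omega_1$, let $d:=\operatorname{dist}(K,\partial\Omega_1)>0$, and choose nested open sets
\[
K\subset U_3\Subset U_2\Subset U_1\Subset\Omega_1
\]
at mutual distances comparable to $d$.

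The first step is to control $\sup_{U_1}|u_m|$ by $\|u_m\|_{L^2(\Omega_1)}$. Since the equation is in non-divergence form with smooth coefficients, I would rewrite it in divergence form,
\[
\partial_k(A_{k\ell}\partial_\ell u_m)-(\partial_kA_{k\ell})\partial_\ell u_m=0,
\]
which is a divergence-form equation with bounded drift. The De Giorgi--Nash--Moser local maximum principle (see \cite[Theorem~8.17]{gilbarg2015elliptic}) then gives
\[
\sup_{U_1}|u_m|\le C\|u_m\|_{L^2(\Omega_1)}.
\]
Alternatively, interior $L^2$-based $H^2$ estimates \cite[Theorem~8.8]{gilbarg2015elliptic} can be iterated to $H^s$ for large $s$, and Sobolev embedding then yields the same bound. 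Since $u_m$ solves a homogeneous equation, no source terms appear, and the constant depends only on the ellipticity constant, on $\|A\|_{C^1}$, and on $d$.

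The second step is to pass from $L^\infty$ to $C^{2,\alpha}$. The interior Schauder estimate \cite[Theorem~6.2]{gilbarg2015elliptic} for $Lu=0$ with $C^\alpha$ coefficients gives
\[
\|u_m\|_{C^{2,\alpha}(\overline{U_3})}\le C\sup_{U_2}|u_m|\le C'\|u_m\|_{L^2(\Omega_1)}\to0.
\]
This yields $u_m\to0$ in $C^2(K)$, which is the claim.

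The main obstacle is regularity: the solutions are a priori only $H^2$, as in Proposition~\ref{prop: weak Runge approximation}, so the first step must be justified for strong solutions. Smoothness of $A$ together with interior $H^s$ regularity \cite[Theorem~8.10]{gilbarg2015elliptic} makes each $u_m$ smooth in $\Omega_1$, so the classical estimates apply. I would therefore use the $H^s$ iteration route, since it avoids any delicacy about the drift term in the De Giorgi--Nash--Moser approach.
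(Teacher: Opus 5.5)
Your proposal is correct and follows the same route as the paper. You first prove an interior bound $\sup|u_m|\le C\|u_m\|_{L^2(\Omega_1)}$ on an intermediate domain, then apply the interior Schauder estimate $\|u_m\|_{C^{2,\alpha}(K)}\le C\sup|u_m|$ for the homogeneous equation. Your extra details are the divergence-form rewrite or the $H^s$ bootstrap for the $L^\infty$ bound, and the interior smoothness of the a priori $H^2$ solutions; these justify steps the paper only asserts.
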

	
	\begin{proof}
		Fix compact sets
		\[
		K\Subset \Omega_2\Subset \Omega_1,
		\]
		where $\Omega_2$ is a smooth subdomain. Since $Lu_m=0$ in $\Omega_1$, the interior Schauder estimate yields
		\[
		\|u_m\|_{C^{2,\alpha}(K)}
		\leq C_K \|u_m\|_{L^\infty(\Omega_2)},
		\]
		where $C_K>0$ is independent of $m$. On the other hand, by the interior estimate for uniformly elliptic equations, we have
		\[
		\|u_m\|_{L^\infty(\Omega_2)}
		\leq C_{\Omega_2}\|u_m\|_{L^2(\Omega_1)},
		\]
		for some constant $C_{\Omega_2}>0$ independent of $m$. Hence
		\[
		\|u_m\|_{C^{2,\alpha}(K)}
		\leq C \|u_m\|_{L^2(\Omega_1)},
		\]
		where $C>0$ is independent of $m$. Since $u_m\to 0$ in $L^2(\Omega_1)$, it follows that
		\[
		u_m\to 0
		\quad \text{in } C^{2,\alpha}(K),
		\]
		and in particular in $C^2(K)$.
	\end{proof}
	
	Combining Proposition~\ref{prop: spanning Hessians at a point}, Proposition~\ref{prop: weak Runge approximation}, and Lemma~\ref{lem: upgrade weak Runge to C2}, we obtain the required richness of global solutions.
	
	\begin{corollary}\label{cor: global spanning Hessians}
		Fix $x_0\in \Omega$, and let
		\[
		A(x_0)=\big(A_{k\ell}(x_0)\big)_{1\le k,\ell\le 2}
		:=\big(F_{k\ell}(D^2u_0^{(2)}(x_0))\big)_{1\le k,\ell\le 2}.
		\]
		Then there exist two global solutions $v_1,v_2\in H^2(\Omega)\cap C^{2,\alpha}_{\mathrm{loc}}(\Omega)$ of
		\[
		F_{k\ell}(D^2u_0^{(2)})\partial_{k\ell}v_j=0
		\quad \text{in } \Omega, \quad j=1,2,
		\]
		such that $D^2v_1(x_0)$ and $D^2v_2(x_0)$ are linearly independent elements of $K_{A(x_0)}$. Consequently,
		\[
		\mathrm{span}\big\{D^2v(x_0):\, F_{k\ell}(D^2u_0^{(2)})\partial_{k\ell}v=0 \text{ in } \Omega\big\}=K_{A(x_0)}.
		\]
	\end{corollary}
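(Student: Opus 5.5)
The plan is to combine the three preceding results directly. Fix $x_0\in\Omega$ and set $L:=A_{k\ell}(x)\partial_{k\ell}$ with $A(x)=\big(F_{k\ell}(D^2u_0^{(2)}(x))\big)$. Since $u_0^{(2)}\in C^\infty(\overline{\Omega})$ is admissible, the matrix $A$ is smooth and uniformly elliptic on $\overline{\Omega}$. Hence both Proposition~\ref{prop: spanning Hessians at a point} and Proposition~\ref{prop: weak Runge approximation} apply to $L$.

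\emph{Step 1: local solutions.} By Proposition~\ref{prop: spanning Hessians at a point}, there are $r>0$ and local solutions $w_1,w_2$ of $Lw_j=0$ in $B_r(x_0)$ such that $D^2w_1(x_0)$ and $D^2w_2(x_0)$ are linearly independent elements of $K_{A(x_0)}$. To place these in the setting of the Runge proposition, I take $\Omega_0:=B_{r/2}(x_0)$, which is a smooth subdomain compactly contained in $\Omega$. Since $w_j\in C^{2,\alpha}(B_r(x_0))$, we have $w_j\in H^2(\Omega_0)$.

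\emph{Step 2: global approximation.} Choose a smaller ball $\Omega_1:=B_{r/4}(x_0)$ and the compact set $K:=\overline{B_{r/4}(x_0)}\subset\Omega_0$. Proposition~\ref{prop: weak Runge approximation} produces global solutions $v_j^{(n)}\in H^2(\Omega)$ of $Lv_j^{(n)}=0$ with $\|v_j^{(n)}-w_j\|_{L^2(K)}\to0$. Interior regularity gives $v_j^{(n)}\in C^{2,\alpha}_{\mathrm{loc}}(\Omega)$, because the coefficients are smooth. The differences $v_j^{(n)}-w_j$ solve $L(\cdot)=0$ in $\Omega_1$ and tend to $0$ in $L^2(\Omega_1)$. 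Lemma~\ref{lem: upgrade weak Runge to C2}, applied with $\Omega_1$ and the compact set $\{x_0\}$ (or $\overline{B_{r/8}(x_0)}$), then gives $D^2v_j^{(n)}(x_0)\to D^2w_j(x_0)$. The one point requiring care is that the lemma is stated for subdomains of $\Omega$ with solutions in $\Omega_1$; this is satisfied here.

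\emph{Step 3: conclusion.} Linear independence is an open condition: the $2\times2$ determinant of the coordinates in a basis of $K_{A(x_0)}$ is continuous. Each $D^2v_j^{(n)}(x_0)$ lies in $K_{A(x_0)}$, because evaluating $Lv_j^{(n)}=0$ at $x_0$ gives $A_{k\ell}(x_0)\partial_{k\ell}v_j^{(n)}(x_0)=0$. So for $n$ large, $v_j:=v_j^{(n)}$ are global solutions with linearly independent Hessians at $x_0$ inside $K_{A(x_0)}$. Since $\dim K_{A(x_0)}=2$ and every global solution has its Hessian at $x_0$ in $K_{A(x_0)}$, the span equality follows.

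I expect the main obstacle to be the regularity bookkeeping in Step 2: making sure the Runge approximants are regular enough near $x_0$ that pointwise Hessians make sense and converge. Interior Schauder theory handles this.
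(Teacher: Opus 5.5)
Your proposal is correct and follows the paper's proof essentially step for step. You take local solutions from Proposition~\ref{prop: spanning Hessians at a point}, approximate them in $L^2$ by global solutions via Proposition~\ref{prop: weak Runge approximation}, upgrade to $C^2$ convergence near $x_0$ with Lemma~\ref{lem: upgrade weak Runge to C2}, and conclude from the openness of linear independence together with $D^2v(x_0)\in K_{A(x_0)}$. The only difference is how the nested domains are chosen, and your choice $\Omega_0=B_{r/2}(x_0)$ makes explicit that $w_j\in H^2(\Omega_0)$, which the paper leaves implicit.
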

	
	\begin{proof}
		By Proposition~\ref{prop: spanning Hessians at a point}, there exist $r>0$ and local solutions $w_1,w_2\in C^{2,\alpha}(B_r(x_0))$ of
		\[
		F_{k\ell}(D^2u_0^{(2)})\partial_{k\ell}w_j=0
		\quad \text{in } B_r(x_0), \quad j=1,2,
		\]
		such that $D^2w_1(x_0)$ and $D^2w_2(x_0)$ are linearly independent in $K_{A(x_0)}$.
		
		Choose a smooth domain $\Omega_1$ such that $B_{r/2}(x_0)\Subset \Omega_1\Subset B_r(x_0)$. By Proposition~\ref{prop: weak Runge approximation}, for each $j=1,2$ there exists a sequence of global solutions $v_{j,m}\in H^2(\Omega)$ of
		\[
		F_{k\ell}(D^2u_0^{(2)})\partial_{k\ell}v_{j,m}=0
		\quad \text{in }\, \Omega
		\]
		such that $v_{j,m}\to w_j$ in $L^2(\Omega_1)$ as $m\to\infty$. Since each difference $v_{j,m}-w_j$ solves the same homogeneous equation in $\Omega_1$, Lemma~\ref{lem: upgrade weak Runge to C2} implies that
		\[
		v_{j,m}\to w_j
		\quad \text{in } C^2(\overline{B_{r/2}(x_0)})
		\]
		as $m\to\infty$. In particular,
		\[
		D^2v_{j,m}(x_0)\to D^2w_j(x_0)
		\quad \text{as } m\to \infty.
		\]
		
		Since linear independence is an open condition, it follows that for all sufficiently large $m$, the matrices $D^2v_{1,m}(x_0)$ and $D^2v_{2,m}(x_0)$ remain linearly independent. Moreover, since each $v_{j,m}$ satisfies the first linearized equation in $\Omega$, evaluating at $x_0$ yields
		\[
		A_{k\ell}(x_0)\partial_{k\ell}v_{j,m}(x_0)=0,
		\]
		and hence
		\[
		D^2v_{j,m}(x_0)\in K_{A(x_0)},
		\quad j=1,2.
		\]
		Therefore, for all sufficiently large $m$, the two matrices $D^2v_{1,m}(x_0)$ and $D^2v_{2,m}(x_0)$ are linearly independent elements of $K_{A(x_0)}$. Since $K_{A(x_0)}$ is two-dimensional, they form a basis of $K_{A(x_0)}$. Renaming
		\[
		v_j:=v_{j,m}, \quad j=1,2,
		\]
		for such a sufficiently large $m$, we obtain the desired conclusion.
	\end{proof}
	
	We now apply Corollary~\ref{cor: global spanning Hessians} to the identity \eqref{eq: gauge invariant derivative 4}. Fix an arbitrary $x_0\in \Omega$. Since \eqref{eq: gauge invariant derivative 4} holds for every global solution of the first linearized equation, Corollary~\ref{cor: global spanning Hessians} implies that, for each fixed pair $1\le a,b\le 2$, one has
	\[
	T^{ab}_{k\ell}(x_0)H_{k\ell}=0
	\quad \text{for all } H\in K_{A(x_0)},
	\]
	where
	\[
	T^{ab}_{k\ell}
	:=
	F_{ab,k\ell}(D^2u_0^{(1)})
	-
	c_0F_{ab,k\ell}(D^2u_0^{(2)}).
	\]
	Therefore, Lemma~\ref{lem: proportionality from orthogonality} yields a scalar function $\lambda^{ab}=\lambda^{ab}(x)$ such that
	\begin{equation}\label{eq: lambda-ab representation}
		F_{ab,k\ell}(D^2u_0^{(1)}(x))
		-
		c_0(x)F_{ab,k\ell}(D^2u_0^{(2)}(x))
		=
		\lambda^{ab}(x) F_{k\ell}(D^2u_0^{(2)}(x))
	\end{equation}
	for all $x\in \Omega$ and all $1\le a,b,k,\ell\le 2$.
	
	The next lemma shows that the coefficient $\lambda^{ab}$ is itself proportional to $A_{ab}=F_{ab}(D^2u_0^{(2)})$.
	
	\begin{lemma}\label{lem: lambda proportional to A}
		Suppose that
		\[
		T^{ab}_{k\ell}=\lambda^{ab}A_{k\ell}
		\]
		for some matrix-valued function $(\lambda^{ab})$, where $A=(A_{k\ell})$ satisfies the uniform ellipticity. Assume further that $T^{ab}_{k\ell}=T^{k\ell}_{ab}$. Then there exists a scalar function $\mu$ such that
		\[
		\lambda^{ab}=\mu A_{ab}.
		\]
		As a result, there holds $T^{ab}_{k\ell}=\mu A_{ab}A_{k\ell}$.
	\end{lemma}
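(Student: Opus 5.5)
The plan is to exploit the symmetry $T^{ab}_{k\ell}=T^{k\ell}_{ab}$ together with the representation $T^{ab}_{k\ell}=\lambda^{ab}A_{k\ell}$. Both hold pointwise, so we fix $x\in\Omega$ and argue at the level of linear algebra. By swapping the index pairs we obtain
\[
\lambda^{ab}A_{k\ell}=T^{ab}_{k\ell}=T^{k\ell}_{ab}=\lambda^{k\ell}A_{ab}
\quad\text{for all } 1\le a,b,k,\ell\le 2 .
\]
In tensor language this reads $\lambda\otimes A=A\otimes\lambda$ as elements of $\cS\otimes\cS$. Here $\lambda=(\lambda^{ab})$ is viewed as a matrix; it is symmetric in $(a,b)$ because each $T^{ab}$ inherits the symmetry $F_{ab,k\ell}=F_{ba,k\ell}$.

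The key step is to contract this identity with a fixed matrix in the $(k,\ell)$ slot so that $A$ becomes a nonzero scalar. Uniform ellipticity makes $A(x)$ positive definite, so $\tr A=A_{k\ell}\delta_{k\ell}>0$. Contracting with $\delta_{k\ell}$ gives
\[
\lambda^{ab}\,\tr A=(\lambda^{k\ell}\delta_{k\ell})\,A_{ab}.
\]
Hence $\lambda^{ab}=\mu A_{ab}$ with
\[
\mu:=\frac{\tr\lambda}{\tr A}=\frac{\lambda^{k\ell}\delta_{k\ell}}{A_{k\ell}\delta_{k\ell}} .
\]
Alternatively, one can contract with $A_{k\ell}$ and use $A:A>0$, which gives $\mu=(\lambda:A)/(A:A)$. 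Either choice works, since all that is needed is a matrix $B$ with $A:B\neq0$. Substituting back into $T^{ab}_{k\ell}=\lambda^{ab}A_{k\ell}$ then yields $T^{ab}_{k\ell}=\mu A_{ab}A_{k\ell}$.

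No step is a serious obstacle. The only points that need care are checking that the symmetry of $T$ holds as stated, and that $\lambda$ is well defined pointwise. The symmetry follows from equality of mixed partials of the smooth function $F$ on the admissible class, since $F_{ab,k\ell}=F_{k\ell,ab}$ for both $D^2u_0^{(1)}$ and $D^2u_0^{(2)}$. Pointwise uniqueness of $\lambda^{ab}$ holds because $A(x)\neq0$. Regularity of $\mu$ is inherited from $\lambda$ and $A$ through the explicit formula above, though the statement only asks for a scalar function.
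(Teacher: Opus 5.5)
Your proof is correct and is essentially the paper's argument. Both derive $\lambda^{ab}A_{k\ell}=\lambda^{k\ell}A_{ab}$ from the symmetry and then contract against a fixed matrix $B$ with $A:B\neq 0$. The paper takes $(k,\ell)=(1,1)$ and uses $A_{11}>0$, giving $\mu=\lambda^{11}/A_{11}$, whereas you contract with $\delta_{k\ell}$ and use $\tr A>0$, giving $\mu=\tr\lambda/\tr A$; the two choices are interchangeable.
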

	
	\begin{proof}
		From the identities
		\[
		T^{ab}_{k\ell}=\lambda^{ab}A_{k\ell}
		\quad \text{and} \quad
		T^{ab}_{k\ell}=T^{k\ell}_{ab},
		\]
		we obtain
		\[
		\lambda^{ab}A_{k\ell}=\lambda^{k\ell}A_{ab}
		\]
		for all $1\le a,b,k,\ell\le 2$. By the ellipticity, $A(x)$ is positive definite. Thus, $A_{11}(x)>0$ for any $x\in\Omega$.
		Define
		\[
		\mu(x):=\frac{\lambda^{11}(x)}{A_{11}(x)}.
		\]
		Then, taking $(k,\ell)=(1,1)$ in the above relation, we obtain
		\[
		\lambda^{ab}(x)A_{11}(x)=\lambda^{11}(x)A_{ab}(x),
		\]
		and hence $\lambda^{ab}(x)=\mu(x)A_{ab}(x)$. Substituting this back into $T^{ab}_{k\ell}=\lambda^{ab}A_{k\ell}$ gives $T^{ab}_{k\ell}=\mu A_{ab}A_{k\ell}$, as claimed.
	\end{proof}
	
	Therefore, applying Lemma~\ref{lem: lambda proportional to A} to \eqref{eq: lambda-ab representation} yields a scalar function $\mu=\mu(x)$ such that
	\begin{equation}\label{eq: mu-scalar representation}
		F_{ab,k\ell}(D^2u_0^{(1)}(x))
		-
		c_0(x)F_{ab,k\ell}(D^2u_0^{(2)}(x))
		=
		\mu(x) F_{ab}(D^2u_0^{(2)}(x))F_{k\ell}(D^2u_0^{(2)}(x))
	\end{equation}
	for all $x\in \Omega$ and all $1\le a,b,k,\ell\le 2$.
	
	\begin{remark}
		In this section, the homogeneity of $F$ and the injectivity of $DF$ are used only to show that the conformal factor does not vary with the boundary perturbation. The non-proportionality condition is not used here; it will be used only in Section~\ref{sec:inverse problem} to remove the remaining scalar factor.
	\end{remark}

	\section{The inverse problem}\label{sec:inverse problem}
	
	Finally, we can prove Theorem \ref{thm: main}.
	
	\begin{proof}[Proof of Theorem~\ref{thm: main}]
		By Proposition~\ref{prop: determination up to gauge}, there exists a positive function $c_0=c_0(x)$ such that $f_1=c_0^{\frac{m}{m-1}}f_2$ in $\Omega$. It remains to show that $c_0=1$.
		
		Let $u_0^{(j)}\in C^\infty(\overline{\Omega})$, $j=1,2$, be the admissible solutions to \eqref{eq: main background zero gauge}. From the proof of Proposition~\ref{prop: determination up to gauge}, we have
		\begin{equation}\label{eq: thm proof M1 tM2}
			D^2u_0^{(1)}=tD^2u_0^{(2)}
			\quad \text{in }\,\Omega,
		\end{equation}
		where $t=c_0^{1/(m-1)}$. Moreover, by the conclusion of Section~\ref{sec:second linearization}, in particular \eqref{eq: mu-scalar representation}, there exists a scalar function $\mu=\mu(x)$ such that
		\begin{equation}\label{eq: thm proof second derivative defect}
			F_{ab,k\ell}(D^2u_0^{(1)})
			-
			c_0F_{ab,k\ell}(D^2u_0^{(2)})
			=
			\mu F_{ab}(D^2u_0^{(2)})F_{k\ell}(D^2u_0^{(2)})
			\quad \text{in } \Omega.
		\end{equation}
		
		Fix any $x\in\Omega$ and set $M_2:=D^2u_0^{(2)}(x)$. Since $D^2u_0^{(1)}(x)=t(x)M_2$, and since the second derivative of $F$ is homogeneous of degree $m-2$, \eqref{eq: thm proof second derivative defect} gives
		\[
		t(x)^{m-2}F_{ab,k\ell}(M_2)-c_0(x)F_{ab,k\ell}(M_2)
		=
		\mu(x)F_{ab}(M_2)F_{k\ell}(M_2).
		\]
		Using $c_0(x)=t(x)^{m-1}$, this becomes
		\[
		t(x)^{m-2}(1-t(x))F_{ab,k\ell}(M_2)
		=
		\mu(x)F_{ab}(M_2)F_{k\ell}(M_2).
		\]
		If $t(x)^{m-2}(1-t(x))\neq 0$, then
		\[
		F_{ab,k\ell}(M_2)
		=
		\gamma F_{ab}(M_2)F_{k\ell}(M_2)
		\]
		for some $\gamma\in\mathbb R$, which contradicts Assumption~\ref{ass: homogeneous nondegeneracy}~\ref{item ass_non-prop}. Hence $t(x)^{m-2}(1-t(x))=0$. Since $t(x)>0$, we have $t(x)=1$, and therefore $c_0(x)=1$. Since $x\in\Omega$ was arbitrary, $c_0=1$ in $\Omega$. The gauge relation in Proposition~\ref{prop: determination up to gauge} then gives
		\[
		f_1=f_2
		\quad \text{in }\Omega.
		\]
		Together with \eqref{eq: BC for the source}, this proves $f_1=f_2$ in $\overline{\Omega}$.
	\end{proof}

	\section*{Statements and Declarations}
	
	\noindent\textbf{Data availability statement.}
	No datasets were generated or analyzed during the current study.
	
	\medskip
	
	\noindent\textbf{Conflict of Interests.} Hereby, we declare there are no conflicts of interest.

	\medskip

	\noindent\textbf{Acknowledgments.} 
	The authors would like to thank Mikko Salo for suggesting the inclusion of the gauge determination result.
	Y.-H. Lin is partially supported by the National Science and Technology Council (NSTC) of Taiwan, NSTC 113-2628-M-A49-003. Y.-H. Lin acknowledges financial support from the Alexander von Humboldt Foundation through the Henriette Herz Scouting Programme, hosted by Universität Duisburg-Essen. J.-N. Wang is partially supported by the National Science and Technology Council of Taiwan, NSTC 112-2115-M-002-010-MY3.

	\bibliography{ref} 
	
	\bibliographystyle{alpha}

\end{document}